\newcommand{\N}{\mathbb{N}}
\newcommand{\Z}{\mathbb{Z}}
\newcommand{\R}{\mathbb{R}}
\newcommand{\T}{\mathbb{T}}
\newcommand{\dis}{\mathrm{d}}
\theoremstyle{plain}
\newtheorem{prop}{Proposition}
\newtheorem{thm}{Theorem}
\newtheorem{coro}{Corollary}
\theoremstyle{remark}
\newtheorem{rem}{Remark}
\title[Harmonic Analysis associated with a discrete Laplacian]{Harmonic Analysis associated with a discrete Laplacian}
\author[\'O. Ciaurri, T. A. Gillespie, L. Roncal, J. L. Torrea, and J. L. Varona]{\'Oscar Ciaurri \and T. Alastair Gillespie \and Luz Roncal \and Jos\'e L. Torrea \and Juan Luis Varona}
\address[\'O. Ciaurri, L. Roncal, and J. L. Varona]{%
    Departamento de Matem\'aticas y Computaci\'on\\
    Universidad de La Rioja\\
    26004 Logro\~no, Spain}
\email{\{oscar.ciaurri,luz.roncal,jvarona\}@unirioja.es}
\address[T. A. Gillespie]{School of Mathematics and Maxwell Institute for Mathematical Sciences\\
    University of Edinburgh\\ 
    Edinburgh EH9 3JZ, Scotland, U.K.}
\email{t.a.gillespie@ed.ac.uk}
\address[J. L. Torrea]{Departamento de Matem\'aticas, Facultad de Ciencias\\
	Universidad Aut\'onoma de Madrid\\
	28049 Madrid, Spain\\
and
Instituto de Ciencias Matem\'aticas (CSIC-UAM-UC3M-UCM)}
\email{joseluis.torrea@uam.es}
\thanks{Research partially supported by grants MTM2012-36732-C03-02 and MTM2011-28149-C02-01 from Spanish Government}
\keywords{Discrete Laplacian, heat semigroup, fractional Laplacian, square functions, Riesz transforms, conjugate harmonic functions, modified Bessel functions}
\subjclass[2010]{Primary: 39A12. Secondary: 26A33, 35K08, 39A14, 42A38, 42A50}
\begin{document}

\begin{abstract}

It is well-known that the fundamental solution of
$$
u_t(n,t)= u(n+1,t)-2u(n,t)+u(n-1,t), \quad n\in\mathbb{Z},
$$
with  $u(n,0) =\delta_{nm}$ for every fixed $m \in\mathbb{Z}$,
is given by $u(n,t) = e^{-2t}I_{n-m}(2t)$, where $I_k(t)$ is the Bessel function of imaginary argument.
In other words, the heat semigroup of the discrete Laplacian is described  by the formal series
$$
W_tf(n) = \sum_{m\in\mathbb{Z}} e^{-2t} I_{n-m}(2t) f(m).
$$
By using  semigroup theory, this formula allows us to analyze some operators associated with the discrete  Laplacian. In particular, we obtain the maximum principle for the discrete fractional Laplacian, weighted $\ell^p(\mathbb{Z})$-boundedness  of conjugate harmonic functions, Riesz transforms and square functions of Littlewood-Paley.

Interestingly,  it is shown that the Riesz transforms coincide essentially with the so called discrete Hilbert transform defined by D.~Hilbert at the beginning of the XX century. We also see that these Riesz transforms are limits of the conjugate harmonic functions.

The results rely on a careful use of several properties of Bessel functions.
\end{abstract}

\maketitle

\section{Introduction}

The purpose of this paper is the analysis of several operators associated with the discrete Laplacian
\[
\Delta_{\dis} f(n) = f(n+1)-2f(n)+f(n-1),\qquad n \in \Z,
\]
in a similar way to the analysis of classical operators associated with the Euclidean Laplacian $\Delta = \frac{\partial^2}{\partial x^2}$. Among them, we study the (discrete) fractional Laplacian, maximal heat and Poisson semigroups, square functions, Riesz transforms and conjugate harmonic functions.

In order to define and study these operators we shall use the heat semigroup $W_t=e^{t\Delta_{\dis}}$ as a fundamental tool. This is not difficult because the fundamental solution of
\[
u_t(n,t)= u(n+1,t)-2u(n,t)+u(n-1,t), \quad n\in\mathbb{Z},
\]
with $u(n,0) =\delta_{nm}$ for every fixed $m \in\mathbb{Z}$,
is given by $u(n,t) = e^{-2t}I_{n-m}(2t)$ (see \cite{Gr} and \cite{GrIl}), where $I_k(t)$ is the Bessel function of imaginary argument (for these functions, see \cite[Chapter~5]{Lebedev}). Consequently, the heat semigroup is given by the formal series
\begin{equation}\label{primercalor}
  W_tf(n) = \sum_{m\in\Z} e^{-2t}I_{n-m}(2t)f(m).
\end{equation}
Then, the function $u(n,t) = W_tf(n)$ defined in~\eqref{primercalor} is the solution to the \textit{discrete} heat equation
\begin{equation}
\label{eq:calorZ}
\begin{cases}
\frac{\partial}{\partial t} u(n,t) =  u(n+1,t) - 2u(n,t) + u(n-1,t), \\[4pt]
u(n,0) = f(n),
\end{cases}
\end{equation}
where $u$ is the unknown function and the sequence $f= \{f(n)\}_{n \in \Z}$ is the initial datum at time $t=0$.
Other second order differential operators and the associated discrete heat kernels arise when dealing with equations connected with physics, namely the Toda lattice, see~\cite{GrIl,Haine,Iliev}.

We first shall see that the heat semigroup is a positive, Markovian, diffusion semigroup, see Section~\ref{sec:disheatsemigroup}. Then, a maximum principle for the fractional Laplacian is proved, see Theorem~\ref{thm:fractional-Laplacian} below. We can apply the general theory developed by E.~M. Stein \cite{St-rojo} to obtain the boundedness on $\ell^p:=\ell^p(\Z)$, $1 < p< \infty$, of the maximal heat and Poisson operators and the square function. However this general theory does not cover the boundedness on the space~$\ell^1$. By analyzing the kernel of these operators we shall get the results in $\ell^p(w):=\ell^p(\Z, w)$, $1\le p < \infty,$ where $w$ will be an appropriate weight, see Theorem~\ref{thm:semigroups-gg}.

Consider the ``first'' order difference operators
\begin{equation}
\label{factorizacion}
Df(n)=f(n+1)-f(n)\quad\text{ and }\quad
\widetilde{D} f(n)=f(n)-f(n-1),
\end{equation}
that allow factorization of the discrete Laplacian as $\Delta_{\dis} = \widetilde{D} D$.
Having a factorization $L= \widetilde{X}X$ for a general positive laplacian $L$, the ``Riesz transforms'' are usually defined as $X(L)^{-1/2}$ and $\widetilde{X}(L)^{-1/2}$, see \cite{Th1, Th2}. In our case, the operator $(-\Delta_{\dis})^{-1/2}$ is not well defined, so we overcome this difficulty by defining the following ``Riesz transforms'':
\begin{equation}
\label{nuevasriesz}
\mathcal{R}= \lim_{\alpha \to (1/2)^-}D(-\Delta_{\dis})^{-\alpha}
\quad\text{and}\quad
\widetilde{\mathcal{R}}= \lim_{\alpha \to (1/2)^-}\widetilde{D}(-\Delta_{\dis})^{-\alpha}.
\end{equation}
It turns out that these operators are convolution operators with the kernels $\{\frac1{\pi(n+1/2)}\}_{n\in \Z} $ and $\{\frac1{\pi(n-1/2)}\}_{n\in \Z}$. A close analysis of the semigroup and its kernel allows us to define the harmonic conjugate functions and to see that some Cauchy--Riemann equations are satisfied, see Theorem~\ref{thm:Cauchy} below. Moreover, it will be shown that the Riesz transforms are the limits of the conjugate harmonic functions. We provide definitions of these operators and we shall prove that they are effectively bounded on $\ell^2$ by means of the Fourier transform and some abstract results on discrete distributions. We refer the reader to the books by R.~E. Edwards \cite{Edwards} and by E.~M. Stein and G.~Weiss~\cite{StWe} for details.

These are the operators that we consider in the paper:
\begin{enumerate}
\item[(i)] The fractional Laplacian $(-\Delta_{\dis})^\sigma$, $0<\sigma<1$.
\item[(ii)] The maximal heat semigroup $W^*f= \sup_{t\ge0} |W_tf|$
with $W_t = e^{t\Delta_{\dis}}$,
and the maximal Poisson semigroup
$P^*f= \sup_{t\ge0} |P_tf|$
with $P_t = e^{-t\sqrt{-\Delta_{\dis}}}$.
\item[(iii)] The square function
\[
g(f) = \Big( \int_0^\infty |t\partial_t e^{t\Delta_{\dis}}f|^2 \,\frac{dt}{t} \Big)^{1/2}.
\]
\item[(iv)] The Riesz transforms, as defined in~\eqref{nuevasriesz}.
\item[(v)] The conjugate harmonic functions
\[
Q_t f= \mathcal{R}  P_t f  \quad\text{and}\quad  \widetilde{Q}_tf = \widetilde{\mathcal{R}}P_t f, \quad t\ge0.
\]
\end{enumerate}

The main results of this paper are collected in Theorems~\ref{thm:fractional-Laplacian}, \ref{thm:semigroups-gg} and~\ref{thm:Cauchy} below. The first one contains maximum and comparison principles for the fractional Laplacian.

\begin{thm}
\label{thm:fractional-Laplacian}
Let $0<\sigma<1$.
\begin{itemize}
\item[(i)] Let $f\in \ell^2$. Assume that $f\ge 0$ and that $f(n_0)=0$ for some $n_0$. Then $(-\Delta_{\dis})^\sigma f(n_0) \le 0.$
Moreover, $(-\Delta_{\dis})^\sigma f(n_0) =0$ only if $f(n) = 0$ for all $n\in \Z.$
\item[(ii)] Let $f,g \in \ell^2$ be such that $f\ge g$ and $f(n_0)=g(n_0)$ at some $n_0\in \Z.$ Then
$(-\Delta_{\dis})^\sigma f(n_0) \le (-\Delta_{\dis})^\sigma g(n_0)$. Moreover $(-\Delta_{\dis})^\sigma f(n_0) = (-\Delta_{\dis})^\sigma g(n_0) $ only if $f(n) = g(n)$ for all $n\in \Z.$
\end{itemize}
\end{thm}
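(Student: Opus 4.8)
The plan is to reduce everything to a single pointwise ``singular sum'' formula for $(-\Delta_{\dis})^\sigma$ with a manifestly positive kernel; once that is in hand, both assertions are one-line consequences. First I would invoke the fact (recalled in Section~\ref{sec:disheatsemigroup}) that $-\Delta_{\dis}$ is a bounded, nonnegative, self-adjoint operator on $\ell^2$ and that $W_t=e^{t\Delta_{\dis}}$ is a positive, Markovian semigroup, so that Bochner's subordination identity $\lambda^\sigma=\frac{1}{\Gamma(-\sigma)}\int_0^\infty(e^{-t\lambda}-1)\,\frac{\dis t}{t^{1+\sigma}}$ (valid for $\lambda\ge0$, $0<\sigma<1$, with $\Gamma(-\sigma)<0$) yields, for $f\in\ell^2$,
\[
(-\Delta_{\dis})^\sigma f(n)=\frac{1}{\Gamma(-\sigma)}\int_0^\infty\bigl(W_tf(n)-f(n)\bigr)\,\frac{\dis t}{t^{1+\sigma}}
=\frac{1}{|\Gamma(-\sigma)|}\int_0^\infty\bigl(f(n)-W_tf(n)\bigr)\,\frac{\dis t}{t^{1+\sigma}}.
\]

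Next I would use the explicit heat kernel $W_tf(n)=\sum_{m\in\Z}e^{-2t}I_{n-m}(2t)f(m)$ together with the generating-function identity $\sum_{k\in\Z}e^{-2t}I_k(2t)=1$ (the Markovian property) to rewrite $f(n)-W_tf(n)=\sum_{m\ne n}e^{-2t}I_{n-m}(2t)\bigl(f(n)-f(m)\bigr)$, and then interchange the sum with the $t$-integral. This gives the representation
\[
(-\Delta_{\dis})^\sigma f(n)=\sum_{m\ne n}\bigl(f(n)-f(m)\bigr)\,K_\sigma(n-m),
\qquad
K_\sigma(j):=\frac{1}{|\Gamma(-\sigma)|}\int_0^\infty e^{-2t}I_j(2t)\,\frac{\dis t}{t^{1+\sigma}}.
\]
The crucial point is that $K_\sigma(j)>0$ for all $j\ne 0$: one has $I_j(s)>0$ for $s>0$, $I_{-j}=I_j$, and the integral converges absolutely because $I_j(2t)\sim t^{|j|}/|j|!$ as $t\to0^+$ (so the integrand is $O(t^{|j|-1-\sigma})$ with $|j|\ge1>\sigma$) while $e^{-2t}I_j(2t)\sim(4\pi t)^{-1/2}$ as $t\to\infty$ (so the integrand is $O(t^{-3/2-\sigma})$). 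The same two estimates, together with $f\in\ell^2\subset\ell^\infty$ and $K_\sigma\in\ell^1(\Z\setminus\{0\})$ (indeed $K_\sigma(j)=O(|j|^{-1-2\sigma})$), justify the Fubini step and the absolute convergence of the final sum.

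Granting this, part (i) is immediate: if $f\ge0$ and $f(n_0)=0$, then
\[
(-\Delta_{\dis})^\sigma f(n_0)=\sum_{m\ne n_0}\bigl(0-f(m)\bigr)K_\sigma(n_0-m)=-\sum_{m\ne n_0}f(m)\,K_\sigma(n_0-m)\le0,
\]
as each summand is nonnegative; and the sum vanishes only if $f(m)=0$ for every $m\ne n_0$, which with $f(n_0)=0$ forces $f\equiv0$. Part (ii) follows by applying (i) to $h=f-g\ge0$, which satisfies $h(n_0)=0$, and using linearity of $(-\Delta_{\dis})^\sigma$ on $\ell^2$: $(-\Delta_{\dis})^\sigma f(n_0)-(-\Delta_{\dis})^\sigma g(n_0)=(-\Delta_{\dis})^\sigma h(n_0)\le0$, with equality only when $h\equiv0$, i.e. $f\equiv g$.

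The main obstacle is entirely in the first half, namely making the kernel representation rigorous: matching the Fourier/spectral definition of $(-\Delta_{\dis})^\sigma$ (symbol $(2-2\cos\theta)^\sigma$ on $\T$) with the subordination integral, and then controlling $K_\sigma$ and the interchange of sum and integral. Both of the latter rest squarely on the positivity and the small- and large-argument asymptotics of the modified Bessel functions $I_j$. Everything after the representation is formal and costs only the observation that $K_\sigma>0$.
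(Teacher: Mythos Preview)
Your argument is correct and is essentially the paper's proof: both start from the semigroup formula $(-\Delta_{\dis})^\sigma f(n)=\frac{1}{\Gamma(-\sigma)}\int_0^\infty(W_tf(n)-f(n))\,\frac{dt}{t^{1+\sigma}}$, use the Markovian identity $\sum_m G(n-m,t)=1$ to rewrite the integrand as $\sum_m G(n_0-m,t)\bigl(f(m)-f(n_0)\bigr)$, and then read off the sign and the equality case from the strict positivity of the heat kernel together with $\Gamma(-\sigma)<0$. The only cosmetic difference is that you push Fubini all the way and package the $t$-integral into an explicit kernel $K_\sigma$, whereas the paper stops at the double expression $\frac{1}{\Gamma(-\sigma)}\int_0^\infty\sum_m G(n_0-m,t)f(m)\,\frac{dt}{t^{1+\sigma}}$ and invokes positivity directly; your version gives a bit more (the pointwise decay of $K_\sigma$), but the core idea and the use of the Bessel asymptotics are the same.
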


In order to get mapping properties in $\ell^1$ or $\ell^p(w)$ spaces, we shall use the vector-valued theory of Calder\'on--Zygmund operators in spaces of homogeneous type $(\Z,\mu,|\cdot|)$. Here, $\mu(A)$ is the counting measure, for a set $A\subseteq \Z$, and $|\cdot|$ is the distance in~$\Z$. A weight on $\Z$ is a sequence $w=\{w(n)\}_{n\in\Z}$ of nonnegative numbers.
We recall  that $w$ is a \textit{discrete Muckenhoupt weight for~$\ell^p$}, and we write $w\in A_p$, if there is a constant $C<\infty$ such that, for every pair of integers $M$, $N$, with $M\le N$,
\begin{equation*}
\Big(\sum_{k=M}^{N}w(k)\Big) \Big(\sum_{k=M}^N w(k)^{-1/(p-1)}\Big)^{1/(p-1)}\le C (N-M+1)^p, \quad 1<p<\infty,
\end{equation*}
and
\[
\Big(\sum_{k=M}^{N}w(k)\Big)\sup_{k\in [M,N]}w(k)^{-1}\le C(N-M+1), \quad p=1,
\]
see \cite[Section~8]{HuMuWh}.

The second result concerns mapping $\ell^p(w)$ properties for the maximal heat and Poisson semigroups and square functions.

\begin{thm}
\label{thm:semigroups-gg}
Let $w\in  A_p$, $1\le p<\infty$. Then the operators $W^*$, $P^*$, and $g$ are norms of vector-valued Calder\'on--Zygmund operators in the sense of the space of homogeneous type $(\Z,\mu,|\cdot|)$.
Therefore, all these operators are bounded from $\ell^p(w)$ into itself for $1<p<\infty$
and also from $\ell^1(w)$ into weak-$\ell^1(w)$.
\end{thm}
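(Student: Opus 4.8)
The plan is to exhibit each of $W^{*}$, $P^{*}$ and $g$ as the pointwise $B$-norm of a $B$-valued convolution operator on $\Z$, and then to apply the vector-valued Calder\'on--Zygmund theory on the space of homogeneous type $(\Z,\mu,|\cdot|)$ (the counting measure being trivially doubling). For $W^{*}$ and $P^{*}$ take $B=L^{\infty}((0,\infty))$ and put $\mathcal{W}f(n)=\{W_{t}f(n)\}_{t>0}$, $\mathcal{P}f(n)=\{P_{t}f(n)\}_{t>0}$, so that $W^{*}f(n)=\|\mathcal{W}f(n)\|_{B}$ and $P^{*}f(n)=\|\mathcal{P}f(n)\|_{B}$; for $g$ take $B=L^{2}((0,\infty),dt/t)$ and $\mathcal{G}f(n)=\{t\partial_{t}W_{t}f(n)\}_{t>0}$, so that $g(f)(n)=\|\mathcal{G}f(n)\|_{B}$. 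Each is convolution with a $B$-valued kernel: $\mathcal{W}$ with $K_{\mathcal{W}}(n)=\{e^{-2t}I_{n}(2t)\}_{t>0}$, $\mathcal{G}$ with $K_{\mathcal{G}}(n)=\{t\partial_{t}(e^{-2t}I_{n}(2t))\}_{t>0}$, and $\mathcal{P}$ with the subordinated kernel $K_{\mathcal{P}}(n)=\{p_{t}(n)\}_{t>0}$, where $p_{t}(n)=\int_{0}^{\infty}\frac{t}{\sqrt{4\pi u^{3}}}\,e^{-t^{2}/(4u)}e^{-2u}I_{n}(2u)\,du$; these representations hold trivially for finitely supported $f$. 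By the general theory, the theorem follows once we check, for $T\in\{\mathcal{W},\mathcal{P},\mathcal{G}\}$: (1) boundedness $\ell^{2}(\Z)\to\ell^{2}(\Z;B)$; and (2) the $B$-valued Calder\'on--Zygmund estimates
\[
\|K_{T}(n)\|_{B}\le\frac{C}{|n|},\qquad \|K_{T}(n+1)-K_{T}(n)\|_{B}\le\frac{C}{n^{2}},\qquad n\in\Z\setminus\{0\},
\]
the second of which yields the H\"ormander condition $\sup_{m\ne0}\sum_{|n|>2|m|}\|K_{T}(n-m)-K_{T}(n)\|_{B}<\infty$ by telescoping over unit steps.

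Step (1) is exactly Stein's theorem for the positive, Markovian, symmetric diffusion semigroup $\{W_{t}\}$ recorded in Section~\ref{sec:disheatsemigroup}; alternatively it is immediate from Parseval on $\T$, the Fourier multipliers being explicit (for $\mathcal{G}$ one even obtains the identity $\|g(f)\|_{\ell^{2}}=\tfrac12\|f\|_{\ell^{2}}$, since $\int_{0}^{\infty}a^{2}t\,e^{-2at}\,\frac{dt}{t}=\tfrac14$ for every $a>0$).

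Step (2) is the technical heart and rests on the integral representation $e^{-2t}I_{n}(2t)=\frac{1}{2\pi}\int_{-\pi}^{\pi}e^{-t\,m(\theta)}e^{in\theta}\,d\theta$ with $m(\theta)=2(1-\cos\theta)$, together with the elementary identities $|e^{i\theta}-1|=\sqrt{m(\theta)}$ and $|m'(\theta)|\le2\sqrt{m(\theta)}$. As functions of $t$, all the relevant kernels have the form $\frac{1}{2\pi}\int_{-\pi}^{\pi}\Phi_{t}(\theta)e^{in\theta}\,d\theta$ with $\Phi_{t}(\theta)\in\{e^{-tm(\theta)},\,-tm(\theta)e^{-tm(\theta)}\}$, possibly multiplied by $(e^{i\theta}-1)$ for the difference kernels $K_{T}(n+1)-K_{T}(n)$. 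Integrating by parts $k$ times in $\theta$ (no boundary terms, by periodicity) gains a factor $|n|^{-k}$, while the Gaussian concentration $e^{-tm(\theta)}\approx e^{-t\theta^{2}}$ near $\theta=0$ turns the resulting $\theta$-derivatives into a bound $Ct^{\alpha_{k}}|n|^{-k}$ for $t\ge1$, with $\alpha_{k}$ linear in $k$, and into a bound carrying an extra factor $t$ for $t\le1$. For $\mathcal{W}$ only $\sup_{t}$ is needed: one integration by parts already gives $e^{-2t}I_{n}(2t)=\frac{2t}{\pi n}\int_{0}^{\pi}\sin(n\theta)\sin\theta\,e^{-tm(\theta)}\,d\theta$, hence $\sup_{t}e^{-2t}|I_{n}(2t)|\le\frac{1}{\pi|n|}$; two integrations by parts give $\sup_{t}e^{-2t}|I_{n+1}(2t)-I_{n}(2t)|\le C/n^{2}$ (here $(m')^{2}\le4m$ and $|e^{i\theta}-1|=\sqrt m$ cancel the $t^{2}$ coming from the second derivative of $e^{-tm}$). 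For $\mathcal{P}$ the two bounds follow from those for $\mathcal{W}$, because $\int_{0}^{\infty}\frac{t}{\sqrt{4\pi u^{3}}}e^{-t^{2}/(4u)}\,du=1$ and the subordinator is nonnegative, so $\sup_{t}|p_{t}(n)|\le\sup_{u}e^{-2u}|I_{n}(2u)|$ and likewise for differences. For $\mathcal{G}$ one combines the above with the ``large-$t$'' estimates $\sup_{n}e^{-2t}|I_{n}(2t)|\le Ct^{-1/2}$ and $\sup_{n}|\partial_{t}^{j}(e^{-2t}I_{n}(2t))|\le Ct^{-1/2-j}$, $j=1,2$, valid for $t\ge1$ and immediate from the same integral representation, and splits $\int_{0}^{\infty}\frac{dt}{t}=\int_{0}^{1}+\int_{1}^{n^{2}}+\int_{n^{2}}^{\infty}$: on $(0,1)$ the extra factor $t$ controls everything; on $(1,n^{2})$ the bound $Ct^{\alpha_{k}}|n|^{-k}$ with $k$ fixed large enough makes $\int_{1}^{n^{2}}$ dominated by its endpoint and of the right size; and on $(n^{2},\infty)$ one uses the recurrence $e^{-2t}(I_{n+1}(2t)-I_{n}(2t))=-\frac{n+1}{2t}e^{-2t}I_{n+1}(2t)-\frac12\partial_{t}(e^{-2t}I_{n+1}(2t))$ together with the large-$t$ decay to get the entrywise bound $C|n|\,t^{-3/2}$, whose $\frac{dt}{t}$-integral over $(n^{2},\infty)$ is $\le C/n^{4}$.

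Granting (1) and (2), the vector-valued Calder\'on--Zygmund theorem on $(\Z,\mu,|\cdot|)$ yields that $\mathcal{W}$, $\mathcal{P}$, $\mathcal{G}$ map $\ell^{p}(w)$ into $\ell^{p}(w;B)$ for $w\in A_{p}$, $1<p<\infty$, and $\ell^{1}(w)$ into weak-$\ell^{1}(w)$ for $w\in A_{1}$; taking $B$-norms gives the stated mapping properties of $W^{*}$, $P^{*}$ and $g$, which in particular are thereby norms of vector-valued Calder\'on--Zygmund operators in the sense of $(\Z,\mu,|\cdot|)$. The step I expect to be the main obstacle is the regularity estimate $\|K_{\mathcal{G}}(n+1)-K_{\mathcal{G}}(n)\|_{L^{2}(dt/t)}\le C/n^{2}$: a naive two-fold integration by parts gives only a pointwise bound $C/n^{2}$ uniform in $t$, and inserting it into $\int_{1}^{n^{2}}\frac{dt}{t}$ costs a factor $\log|n|$ that breaks the H\"ormander condition; removing this logarithm forces the regime split above, using many integrations by parts in the oscillatory range $t\le n^{2}$ and the Bessel recurrence plus decay in the complementary range. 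The rest is careful but routine bookkeeping of these regimes, together with the analogous (easier) computations for $\mathcal{W}$ and $\mathcal{P}$.
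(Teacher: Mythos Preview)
Your overall strategy coincides with the paper's: cast each operator as the $B$-norm of a $B$-valued convolution operator, anchor at $\ell^2$ via Stein's diffusion theory (or Plancherel), and check $B$-valued Calder\'on--Zygmund size and smoothness of the kernel. The genuine difference lies in how the kernel bounds are obtained.

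The paper never uses the Fourier representation $G(n,t)=\frac{1}{2\pi}\int_{-\pi}^{\pi}e^{-tm(\theta)}e^{in\theta}\,d\theta$. Instead it works with Schl\"afli's Poisson integral $I_\nu(z)=\frac{z^\nu}{\sqrt\pi\,2^\nu\Gamma(\nu+1/2)}\int_{-1}^{1}e^{-zs}(1-s^2)^{\nu-1/2}\,ds$ and its iterated integrations by parts, which express the first, second and third differences of $I_\nu$ as single integrals with the cancellation already built in (formulas~\eqref{eq:difIs}--\eqref{eq:difIs4terms}). For the $L^\infty$-valued kernels (Proposition~\ref{prop:heat-kernel-estimates}) the substitution $w=t(1+s)/2$ together with the elementary bound $(1-r)^\eta r^\gamma\le(\gamma/(\gamma+\eta))^\gamma$ gives both size and smoothness directly, with no regime split in~$t$. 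For the $L^2(dt/t)$-valued kernel (Proposition~\ref{prop:gg-kernel-estimates}) one squares, applies Fubini in $(t,s,u)$, and evaluates the resulting integrals as explicit Beta functions; the bounds drop out as ratios of Gamma functions, again with no case analysis in~$t$.

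Your oscillatory-integral route is valid and in some respects cleaner for $W^*$ and $P^*$; the one-line bound $\sup_t e^{-2t}I_n(2t)\le 1/(\pi|n|)$ is pleasant, and the method does not depend on Bessel-specific identities. For the $g$-kernel, however, the regime split $(0,1)\cup(1,n^2)\cup(n^2,\infty)$ and the need for $k\ge3$ integrations by parts on the middle range make the bookkeeping heavier than you indicate: checking that $\|\partial_\theta^k\Phi_t\|_{L^1}\le C_k t^{(k-2)/2}$ uniformly across all the Fa\`a di Bruno terms (some of which involve $m'',m''',\dots$, whose degrees in $\theta$ do not decrease monotonically) is precisely where a loose estimate would reinstate the logarithm you warned about. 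The paper's Schl\"afli/Beta approach trades that analysis for a handful of special-function identities and delivers the smoothness bound in one stroke.
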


Finally, Riesz transforms can be seen as limits of conjugate harmonic functions; the latter are bounded on $\ell^p(w)$ and satisfy some Cauchy--Riemann equations.

\begin{thm}
\label{thm:Cauchy}
Let $f\in\ell^p(w)$, $1\le p < \infty$, with $w\in A_p$. Then
\begin{itemize}
\item[(i)] The operators $ Q^*f = \sup_{t\ge0} Q_t f$ and $ \widetilde{Q}^* f= \sup_{t\ge0}\widetilde{Q}_t f$ are bounded
from $\ell^p(w), 1<p< \infty$, into itself and from $\ell^1(w)$ into weak-$\ell^1(w).$
\item[(ii)] The operators $Q_t$, $\widetilde{Q}_t$ and $P_t$ satisfy the Cauchy--Riemann type equations
\[
\begin{cases}
\partial_t(Q_tf) = -D(P_tf), \\
 \widetilde{D}(Q_tf) = \partial_t(P_tf);
\end{cases}
\quad \quad\begin{cases}
\partial_t(\widetilde{Q}_tf)
= -\widetilde{D}(P_tf),\\
 D(\widetilde{Q}_tf) = \partial_t(P_tf).
\end{cases}
\]
Moreover, $\partial^2_{tt} Q_tf(n) + \Delta_{\dis} Q_tf(n) =0$
and $\partial^2_{tt} \widetilde{Q}_tf(n)  + \Delta_{\dis} \widetilde{Q}_tf(n) =0$.
\item[(iii)] We have
\[
\lim_{t\to 0} Q_tf(n) = \mathcal{R} f(n) \quad \text{and}\quad \lim_{t\to 0} \widetilde{Q}_tf(n) = \widetilde{\mathcal{R}} f(n),
\]
for $n \in \mathbb{Z}$.
The limits also holds in $\ell^p(w)$ sense for $1<p<\infty$.
\end{itemize}
\end{thm}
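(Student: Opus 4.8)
The plan is to reduce everything to two ingredients: the boundedness of the Riesz transforms $\mathcal{R},\widetilde{\mathcal{R}}$ on $\ell^p(w)$ (which will follow from identifying them as convolution with the kernels $\{\frac{1}{\pi(n+1/2)}\}$ and $\{\frac{1}{\pi(n-1/2)}\}$ and recognizing these as classical discrete Hilbert-type kernels, hence Calderón--Zygmund operators in $(\Z,\mu,|\cdot|)$), and the boundedness of the maximal Poisson semigroup $P^*$ from Theorem~\ref{thm:semigroups-gg}. For part (i), I would write $Q_tf = \mathcal{R}P_tf$ and try to move the supremum inside: since $\mathcal{R}$ is convolution with a fixed kernel, one has $\sup_t|Q_tf| = \sup_t|\mathcal{R}P_tf|$, and the cleanest route is to show that the operator $f\mapsto \{Q_tf\}_{t\ge 0}$ is itself a vector-valued Calderón--Zygmund operator with values in $L^\infty((0,\infty))$, whose kernel is $\mathcal{R}$ applied in the space variable to the Poisson kernel. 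The size and smoothness estimates on this kernel come from the known estimates on the Poisson kernel $e^{-t\sqrt{-\Delta_\dis}}$ (obtained via subordination from $W_t=e^{-2t}I_n(2t)$ and the asymptotics of Bessel functions) combined with the decay of the Riesz kernel; then the general vector-valued CZ theory on spaces of homogeneous type gives the $\ell^p(w)$ and weak-$\ell^1(w)$ bounds. Alternatively, and perhaps more simply, one bounds $Q^*f$ pointwise by a constant times $\mathcal{R}_*(P^*f)$-type expression or uses that $\mathcal{R}$ commutes with the semigroup so that $Q^* = \mathcal{R}P^*$ in a suitable sense and composes the two boundedness results.

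For part (ii), the Cauchy--Riemann equations are a direct computation once one has the Fourier-side description. On the torus $\T$, $\Delta_\dis$ acts as multiplication by $-4\sin^2(\theta/2) =: -\phi(\theta)$, so $-\Delta_\dis$ has symbol $\phi(\theta)\ge 0$; the operators $D$ and $\widetilde D$ have symbols $e^{i\theta}-1$ and $1-e^{-i\theta}$ respectively, and $\mathcal{R}=D(-\Delta_\dis)^{-1/2}$, $\widetilde{\mathcal{R}}=\widetilde D(-\Delta_\dis)^{-1/2}$ have symbols $(e^{i\theta}-1)/\sqrt{\phi(\theta)}$ and $(1-e^{-i\theta})/\sqrt{\phi(\theta)}$. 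Since $P_t$ has symbol $e^{-t\sqrt{\phi(\theta)}}$, the identities $\partial_t(Q_tf)=-D(P_tf)$, $\widetilde D(Q_tf)=\partial_t(P_tf)$ become the pointwise symbol identities $-\sqrt{\phi}\cdot\frac{e^{i\theta}-1}{\sqrt\phi}\,e^{-t\sqrt\phi} = -(e^{i\theta}-1)e^{-t\sqrt\phi}$ and $(1-e^{-i\theta})\cdot\frac{e^{i\theta}-1}{\sqrt\phi}\,e^{-t\sqrt\phi} = -\sqrt\phi\, e^{-t\sqrt\phi}$, the latter using $(1-e^{-i\theta})(e^{i\theta}-1) = -(2-e^{i\theta}-e^{-i\theta}) = -\phi(\theta)$. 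The second-order equation $\partial_{tt}^2 Q_tf + \Delta_\dis Q_tf = 0$ then follows by differentiating the first Cauchy--Riemann equation in $t$ and using the semigroup identity $\partial_t P_t = -\sqrt{-\Delta_\dis}\,P_t$, or directly from the symbol $\phi\, e^{-t\sqrt\phi} = (-\Delta_\dis)P_t$-side cancellation; the same computation handles the tilded versions. The only care needed is to justify differentiating under the Fourier integral and passing back and forth between $\ell^2$ and the torus, which is routine given the rapid decay of all the symbols involved for $t>0$.

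For part (iii), the pointwise limit $\lim_{t\to 0}Q_tf(n) = \mathcal{R}f(n)$ should follow from continuity of $P_t$ at $t=0$ together with the fact that $\mathcal{R}$ is a bounded convolution operator: writing $Q_tf - \mathcal{R}f = \mathcal{R}(P_tf - f)$, one needs $P_tf\to f$ in a sense preserved by $\mathcal{R}$, which holds because $P_t$ is a nice approximation of the identity on $\ell^p(w)$ and $\mathcal{R}$ is $\ell^p(w)$-bounded, giving the $\ell^p(w)$ convergence for $1<p<\infty$; the pointwise statement for a single $n$ then follows since $\ell^p(w)$ convergence controls each coordinate (the weight being positive and the point mass at $n$ being in the dual). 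The main obstacle throughout is part (i): establishing the Calderón--Zygmund kernel estimates for the map $f\mapsto\{Q_tf\}_t$ requires combining the subordination-formula asymptotics of the discrete Poisson kernel with the $1/n$ decay of the Riesz kernel and checking the Hörmander-type smoothness condition uniformly in $t$; this is where the "careful use of properties of Bessel functions" advertised in the abstract will be doing the real work, whereas parts (ii) and (iii) are essentially formal once the Fourier picture and the boundedness of $\mathcal{R},\widetilde{\mathcal{R}}$ are in hand.
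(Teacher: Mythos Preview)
Your outline for parts (ii) and (iii) is correct and matches the paper's approach: the Cauchy--Riemann identities are checked on the Fourier side via the multipliers $-ie^{\mp i\theta/2}e^{-2t|\sin\theta/2|}$ for $Q_t,\widetilde Q_t$, and the limits in (iii) follow from $Q_tf-\mathcal{R}f=\mathcal{R}(P_tf-f)$ together with the $\ell^p(w)$-boundedness of $\mathcal{R}$ and $P_tf\to f$. (A minor point: with the paper's convention $\mathcal{F}_\Z f(\theta)=\sum f(n)e^{in\theta}$, the symbol of $D$ is $e^{-i\theta}-1$, not $e^{i\theta}-1$; your identities still go through after this correction.)

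The gap is in part (i). Your ``alternative'' route of composing boundedness, $Q^*=\mathcal{R}P^*$, does not work: $\mathcal{R}$ is a singular integral, not a positive operator, so $\sup_t|\mathcal{R}P_tf(n)|$ is not controlled pointwise by anything like $\mathcal{R}$ applied to $P^*f$. Your primary route---proving that the $L^\infty_t$-valued kernel of $f\mapsto\{Q_tf\}_t$ satisfies Calder\'on--Zygmund estimates---is the right strategy and is exactly what the paper does, but the sentence ``the size and smoothness estimates come from the Poisson kernel estimates combined with the decay of the Riesz kernel'' hides a genuine difficulty. The kernel of $Q_t$ at $m$ is the discrete convolution $\sum_k \frac{1}{\pi(m-k+1/2)}\,P(k,t)$; both factors decay only like $1/|n|$ uniformly in $t$, so a size-only bound yields $\log|m|/|m|$, not $C/|m|$. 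Recovering the correct bound from this expression would require exploiting the odd symmetry of the Riesz kernel against the smoothness of $P(\cdot,t)$, and you have not indicated how.

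The paper sidesteps the convolution entirely. Differentiating the subordination identity in $\beta$ gives
\(
\beta^{-1}e^{-\beta t}=\pi^{-1/2}\int_0^\infty e^{-t^2/(4v)}e^{-v\beta^2}\,v^{-1/2}\,dv;
\)
taking $\beta=2|\sin\theta/2|$ absorbs the factor $(4\sin^2\theta/2)^{-1/2}$ coming from $\mathcal{R}$ into the integral and yields the closed formula
\[
Q_tf(n)=\frac{1}{\sqrt\pi}\int_0^\infty e^{-t^2/(4v)}\,DW_vf(n)\,\frac{dv}{v^{1/2}}.
\]
The kernel is therefore $\pi^{-1/2}\int_0^\infty e^{-t^2/(4v)}\,e^{-2v}\bigl(I_{m+1}(2v)-I_m(2v)\bigr)\,v^{-1/2}\,dv$, and the bounds $\sup_t|Q(m,t)|\le C/|m|$ and $\sup_t|DQ(m,t)|\le C/m^2$ are then obtained directly from Schl\"afli's integral representation of $I_\nu$ and the difference identities for $I_{\nu+1}-I_\nu$ and $I_{\nu+2}-2I_{\nu+1}+I_\nu$, exactly parallel to the estimates already carried out for $W^*$ and $g$. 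This modified subordination formula---turning $\mathcal{R}P_t$ into an integral of $DW_v$ rather than a spatial convolution of two slowly decaying kernels---is the missing idea in your plan for~(i).
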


The study on $\ell^p$ of discrete operators of harmonic analysis was initiated by M.~Riesz~\cite{Riesz}. Apart from the $L^p(\R)$ boundedness of the Hilbert transform, he showed the $\ell^p$ boundedness of its discrete analogue. Later, A.~P. Calder\'on and A.~Zygmund \cite{Calderon-Zygmund} noticed that $L^p(\R^d)$ boundedness of singular integrals implied the $\ell^p(\Z^d)$ boundedness of their discrete analogues. R.~Hunt, B.~Muckenhoupt and R.~Wheeden proved weighted inequalities for the discrete Hilbert and discrete maximal operator in the one-dimensional case~\cite{HuMuWh}.
In the last few years, several works have been developed for non convolution discrete analogues of continuous operators; some contributions were made by I. Arkhipov and K.~I. Oskolkov, see~\cite{ArOs},
J. Bourgain \cite{Bourgain4} and Stein and S.~Wainger~\cite{StWa1,StWa2}.
For references concerning research on discrete analogues see~\cite{Pierce-tesis}, where a brief history and a nice exposition of recent progress can be found.

In this work we show one-dimensional results on some operators of the harmonic analysis. Results concerning multidimensional discrete fractional integrals and multidimensional discrete Riesz transforms will appear elsewhere.


The paper is organized as follows. In Section~\ref{sec:disheatsemigroup} the theory of semigroups for the operator~\eqref{primercalor} is developed. In Section~\ref{sec:maximum} we prove Theorem~\ref{thm:fractional-Laplacian} concerning the maximum and comparison principles for the fractional Laplacian, and Theorem~\ref{thm:semigroups-gg} is proved in Sections~\ref{sec:semigroups} and~\ref{sec:gg}. Sections~\ref{sec:Riesz} and~\ref{sec:Cauchy} contain the definition on $\ell^2$ of Riesz transforms and conjugate harmonic functions and the proof of Theorem~\ref{thm:Cauchy}. Finally, note that the modified Bessel functions~$I_k$ are used often throughout the paper. They always involve rather sophisticated and technical computations; then, to make the paper more readable we collect in Section~\ref{sec:preliminaries} all the properties that we need concerning these functions.

\section{The discrete heat and Poisson semigroups}
\label{sec:disheatsemigroup}

Let
\begin{equation}
\label{eq:heatkernel}
  G(k,t) = e^{-2t} I_{k}(2t), \quad  k \in \Z.
\end{equation}
Observe that, by \eqref{eq:simetriak}, $G(-k,t) = G(k,t)$. Consider  the operator
\begin{equation}
\label{eq:heatconv}
  W_tf(n) = \sum_{m\in\Z} G(n-m,t) f(m), \quad t>0.
\end{equation}
We shall prove in Proposition~\ref{prop:heat-is-semigroup} that $\{W_t\}_{t \ge 0}$ is a \textit{positive Markovian diffusion semigroup}, see \cite[Chapter 3, p.~65]{St-rojo}.

Some previous definitions are needed. Let $\T\equiv\R/(2\pi\Z)$ be the one-dimensional torus. We identify the torus with the interval $(-\pi,\pi]$ and functions on $\T$ with $2\pi$-periodic functions on~$\R$. Also, integration over $\T$ can be described in terms of Lebesgue integration over $(-\pi,\pi]$.
Given a sequence $f \in \ell^1$ we can define its Fourier transform $\mathcal{F}_{\Z}(f)(\theta)= \sum_n  f(n) e^{in\theta}$, $\theta \in \T$.
It is well known that the operator $f \mapsto \mathcal{F}_{\Z}(f)$ can be extended as an isometry from $\ell^2$ into $L^2(\T)$, where the inverse operator $ \mathcal{F}_{\Z}^{-1}$ is given by
\begin{equation}\label{inversion}
\mathcal{F}_\Z^{-1}(\varphi)(n) = \frac1{2\pi}\int_{\T} \varphi(\theta) e^{-in\theta} \,d\theta.
\end{equation}

\begin{prop}
\label{prop:heat-is-semigroup}
Let $f\in \ell^\infty.$
The family $\{W_t\}_{t \ge 0}$ satisfies
\begin{enumerate}
\item[(i)] $W_0f = f$.
\item[(ii)] $W_{t_1}W_{t_2}f = W_{t_1+t_2}f$.
\item[(iii)] If $f\in \ell^2$ then $W_tf \in \ell^2$ and $\lim_{t\to 0}W_tf= f$ in~$\ell^2$.
\item[(iv)] (Contraction property) $\|W_t f\|_{\ell^p} \le \|f\|_{\ell^p}$ for $1 \le p \le +\infty$.
\item[(v)] (Positivity preserving) $W_t f \ge 0$ if $f \ge 0$, $f\in \ell^2$.
\item[(vi)] (Markovian property) $W_t 1 = 1$.
\end{enumerate}
\end{prop}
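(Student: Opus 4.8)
The plan is to prove most items by passing to the Fourier side, where the heat kernel diagonalizes, and to use the generating function identity for the modified Bessel functions $I_k$. The key computation is
\[
\mathcal{F}_\Z\big(G(\cdot,t)\big)(\theta) = e^{-2t}\sum_{k\in\Z} I_k(2t)\, e^{ik\theta} = e^{-2t}\, e^{2t\cos\theta} = e^{-2t(1-\cos\theta)} = e^{-4t\sin^2(\theta/2)},
\]
using the classical generating function $\sum_{k\in\Z} I_k(z) e^{ik\theta} = e^{z\cos\theta}$ (this will be among the Bessel identities collected in Section~\ref{sec:preliminaries}). Since $W_tf = G(\cdot,t)*f$ is a convolution, on the Fourier transform side it becomes multiplication by the symbol $m_t(\theta):=e^{-4t\sin^2(\theta/2)}$. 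First I would record that $0< m_t(\theta)\le 1$ for $t>0$ and $m_0\equiv 1$, and that $m_{t_1}m_{t_2}=m_{t_1+t_2}$. This immediately gives (i) and (ii) for $f\in\ell^2$ (then extend (ii) to $\ell^\infty$ by noting both sides are finite sums against the absolutely summable kernel, using $\sum_k G(k,t)=1$, which follows from the symbol at $\theta=0$, or directly from $\sum_k e^{-2t}I_k(2t) = e^{-2t}e^{2t}=1$). For (iii), $\|W_tf\|_{\ell^2}=\|m_t\,\mathcal{F}_\Z f\|_{L^2(\T)}\le\|\mathcal{F}_\Z f\|_{L^2(\T)}=\|f\|_{\ell^2}$ by Parseval, and $\lim_{t\to0}\|W_tf-f\|_{\ell^2}^2 = \lim_{t\to0}\int_\T |m_t(\theta)-1|^2\,|\mathcal{F}_\Z f(\theta)|^2\,d\theta/(2\pi)=0$ by dominated convergence, since $0\le 1-m_t\le 1$ and $m_t(\theta)\to1$ pointwise.

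For (iv), the case $p=\infty$ is immediate from $\|W_tf\|_{\ell^\infty}\le\big(\sum_m G(n-m,t)\big)\|f\|_{\ell^\infty}=\|f\|_{\ell^\infty}$ once we know $G(k,t)\ge0$ and $\sum_k G(k,t)=1$; the case $p=1$ follows the same way by Tonelli, interchanging the order of summation; and $1<p<\infty$ then follows by the Riesz--Thorin interpolation theorem (or directly by Minkowski's integral inequality for the convolution, using $\|G(\cdot,t)\|_{\ell^1}=1$). Item (v) reduces to the positivity of the kernel: $G(k,t)=e^{-2t}I_k(2t)>0$ for $t>0$ because $I_k(z) = \sum_{j\ge0} \frac{(z/2)^{2j+|k|}}{j!\,(j+|k|)!}>0$ for $z>0$ (again a Section~\ref{sec:preliminaries} fact), so $W_tf$ is a sum of nonnegative terms when $f\ge0$. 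Item (vi) is $W_t1(n)=\sum_m G(n-m,t) = \sum_k G(k,t)=1$, already used above.

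The only genuine subtlety is that $W_tf$ is defined by an infinite series, so convergence and the legitimacy of rearrangements must be checked whenever $f\notin\ell^2$; here the decay of $I_k(2t)$ in $k$ for fixed $t$ (super-exponential, from the series or from the asymptotics in Section~\ref{sec:preliminaries}, giving $G(\cdot,t)\in\ell^1$ with $\ell^1$-norm $1$) makes everything absolutely convergent for $f\in\ell^\infty$, so all interchanges of summation in (ii), (iv) and (vi) are justified by Fubini--Tonelli. I expect this bookkeeping—rather than any single hard estimate—to be the main thing to get right; the structural facts ($m_t$ is a contractive, multiplicative, positive-definite family of multipliers with $m_t(0)=1$) do all the real work, and they are exactly the conditions characterizing a positive Markovian diffusion semigroup in the sense of \cite[Chapter~3]{St-rojo}.
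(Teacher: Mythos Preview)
Your proposal is correct and follows essentially the same approach as the paper: kernel positivity and $\sum_k G(k,t)=1$ for (iv)--(vi), and the Fourier multiplier $m_t(\theta)=e^{-4t\sin^2(\theta/2)}$ with dominated convergence for~(iii). The only minor differences are that the paper proves (ii) directly for $f\in\ell^\infty$ via Neumann's addition formula $I_r(t_1+t_2)=\sum_{k}I_k(t_1)I_{r-k}(t_2)$ (your multiplicativity $m_{t_1}m_{t_2}=m_{t_1+t_2}$ is the Fourier-side restatement of this, so your extension step from $\ell^2$ to $\ell^\infty$ is unnecessary if you just invoke Neumann), and the paper obtains $\mathcal{F}_\Z(G(\cdot,t))$ from the integral representation $I_m(z)=\tfrac1\pi\int_0^\pi e^{z\cos\theta}\cos m\theta\,d\theta$ rather than the generating function you use---but these are equivalent computations.
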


\begin{proof}
By using the identity~\eqref{eq:sumIk}, we have
\[
\biggl|\sum_{m\in\Z} e^{-2t} I_m(2t) f(n-m) \biggr|
\le \|f\|_{\ell^\infty} \sum_{m\in\Z} e^{-2t} I_m(2t) = \|f\|_{\ell^\infty}.
\]
Therefore $W_t$ is well defined in $\ell^\infty.$ Now~\eqref{eq:Ik0} gives~(i), since
\[
  W_0f(n) = \sum_{m\in\Z} G(n-m,0) f(m) = \sum_{m\in\Z} e^{0} I_{n-m}(0) f(m) = f(n).
\]

Concerning~(ii), we use~\eqref{eq:Neumann}, so that
\begin{align*}
  \sum_{k \in\Z} &G(n-k,t_1) G(k-m,t_2) \\
  &= \sum_{k \in\Z} e^{-2t_1} I_{n-k}(2t_1) e^{-2t_2} I_{k-m}(2t_2) \\
  &= e^{-2(t_1+t_2)} I_{n-m}(2(t_1+t_2))
  = G(n-m,t_1+t_2).
\end{align*}
Then,
\begin{align*}
  W_{t_1}W_{t_2}f(n) &= W_{t_1} \left(\sum_{m\in\Z} G( \cdot - m,t_2) f(m) \right) (n) \\
  &= \sum_{k\in\Z} G(n-k,t_1) \left(\sum_{m\in\Z} G(k-m,t_2) f(m) \right) \\
  &= \sum_{m\in\Z} \left(\sum_{k\in\Z} G(n-k,t_1) G(k-m,t_2) \right) f(m) \\
  &= \sum_{m\in\Z} G(n-m,t_1+t_2)f(m) = W_{t_1+t_2}f(n)
\end{align*}
and the proof is finished. We skip the proof of~(iii) for a while.
For~(iv), Minkowski's integral inequality yields
\begin{align*}
  \|W_t f\|_{\ell^p}
  &= \left( \sum_{n\in\Z} |W_tf(n)|^p \right)^{1/p}
  = \left( \sum_{n\in\Z} \biggl| \sum_{m\in\Z} e^{-2t} I_m(2t) f(n-m) \biggr|^p \right)^{1/p} \\
  &\le \sum_{m\in\Z} \left( \sum_{n\in\Z} |e^{-2t} I_m(2t) f(n-m)|^p \right)^{1/p} \\
  &= \sum_{m\in\Z} e^{-2t} I_m(2t) \left( \sum_{n\in\Z} |f(n-m)|^p \right)^{1/p} \\
  &= \sum_{m\in\Z} e^{-2t} I_m(2t) \|f\|_{\ell^p} = \|f\|_{\ell^p},
\end{align*}
where we have used both~\eqref{eq:Ik>0} and~\eqref{eq:sumIk}. Part~(v) follows from~\eqref{eq:Ik>0}. Part~(vi) is obtained by using~\eqref{eq:sumIk}:
\[
  W_t 1(n) = \sum_{m\in\Z} e^{-2t} I_{m-n}(2t) \cdot 1 = \sum_{m\in\Z} e^{-2t} I_{m}(2t) = 1
\]
for every $n \in \Z$.

Finally, we prove (iii). We have already proved the boundedness in $\ell^2$ and we only need to care about the limit. Observe that we can write $W_tf(n) = (G(\cdot,t)\ast f) (n)$, where the convolution is performed on~$\Z$ (i.e., $g\ast f(n) = \sum_{m} g(n-m)f(m)$). Moreover,
\[
\mathcal{F}_{\Z}(G(\cdot,t)\ast f)(\theta)
= \mathcal{F}_{\Z}(G(\cdot,t))(\theta)\mathcal{F}_{\Z}(f)(\theta).
\]
We compute $\mathcal{F}_{\Z}(G(\cdot,t))(\theta)$. By~\eqref{eq:heatkernel}
and the formula (see \cite[p.~456]{Prudnikov})
\[
\int_0^{\pi}e^{z\cos\theta}\cos m\theta\,d\theta = \pi I_m(z),\quad |\arg z|<\pi,
\]
we have
\begin{align*}
G(m,t) = e^{-2t}I_m(2t)
&= \frac{e^{-2t}}{\pi} \int_0^{\pi}e^{2t\cos\theta}\cos m\theta\,d\theta\\
&= \frac{e^{-2t}}{2\pi} \int_{-\pi}^{\pi}e^{2t\cos\theta}(\cos m\theta-i\sin m\theta)\,d\theta\\
&= \frac{e^{-2t}}{2\pi} \int_{-\pi}^{\pi}e^{2t\cos\theta}e^{-im\theta}\,d\theta.
\end{align*}
In view of the inversion formula~\eqref{inversion}
we conclude that
\begin{equation*}
\mathcal{F}_{\Z}(G(\cdot,t))(\theta) = e^{-2t(1-\cos\theta)}
= e^{-4t\sin^2\frac{\theta}{2}}.
\end{equation*}
Therefore,
\[
\lim_{t\to 0} \|W_tf-f\|_{\ell^2}
= \lim_{t\to 0} \|(e^{-4t\sin^2\frac{(\cdot)}{2}} -1) \mathcal{F}_{\Z}(f)(\theta)\|_{L^2(\T)} = 0.
\qedhere
\]
\end{proof}

\begin{rem}
Observe that, for $0<t < 1/8$, we have
\begin{equation*}
\|W_tf-f\|_{\ell^2}
= \|(e^{-4t\sin^2\frac{(\cdot)}{2}} -1)\mathcal{F}_{\Z}(f)(\theta)\|_{L^2(\T)}
\le C t \|f\|_{\ell^2}.
\end{equation*}
In particular, for $f\in \ell^2$ we have
\begin{equation}\label{util}
|W_tf(n)-f(n)| \le Ct\|f\|_{\ell^2}
\end{equation}
for every $n \in \Z$.
\end{rem}

\begin{prop}
\label{prop:solutionHeat}
Let $f\in \ell^\infty$. Then,
\[
  u(n,t) = \sum_{m\in\Z} e^{-2t} I_{n-m}(2t) f(m),
  \quad t>0,\quad n\in \Z,
\]
is a solution of the equation~\eqref{eq:calorZ}.
\end{prop}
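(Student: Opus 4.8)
The plan is to show that the heat kernel $G(k,t)=e^{-2t}I_k(2t)$ is itself a solution of the difference--differential equation in its discrete variable, and then to transfer this to $u$ by differentiating the series~\eqref{eq:heatconv} term by term in~$t$. The kernel identity is a short computation: from the standard recurrence $2I_k'(z)=I_{k-1}(z)+I_{k+1}(z)$ for the modified Bessel functions and the chain rule,
\[
\partial_t G(k,t) = -2e^{-2t}I_k(2t)+e^{-2t}\bigl(I_{k-1}(2t)+I_{k+1}(2t)\bigr) = G(k+1,t)-2G(k,t)+G(k-1,t),
\]
for every $k\in\Z$ and $t>0$. Replacing $k$ by $n-m$ and summing against $f(m)$ then gives, once the interchange below is justified,
\[
\partial_t u(n,t) = \sum_{m\in\Z}\bigl(G(n+1-m,t)-2G(n-m,t)+G(n-1-m,t)\bigr)f(m) = u(n+1,t)-2u(n,t)+u(n-1,t).
\]

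The one genuinely technical point is the legitimacy of differentiating the series over $m$ term by term. Fix $n\in\Z$ and a compact interval $[a,b]\subset(0,\infty)$. Since $f\in\ell^\infty$, the $m$-th summand of the differentiated series is dominated by $\|f\|_{\ell^\infty}\,|\partial_t G(n-m,t)|$, and the kernel identity together with the positivity~\eqref{eq:Ik>0} and the normalization $\sum_k e^{-2t}I_k(2t)=1$ from~\eqref{eq:sumIk} give
\[
\sum_{k\in\Z}\sup_{t\in[a,b]}|\partial_t G(k,t)| \le 4\sum_{k\in\Z}\sup_{t\in[a,b]} e^{-2t}I_k(2t),
\]
which is finite because $I_k(2t)$ decays super-exponentially in $k$, uniformly for $t$ in compact subsets of $[0,\infty)$ (see Section~\ref{sec:preliminaries}). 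Hence the series of $t$-derivatives converges uniformly on $[a,b]$ by the Weierstrass $M$-test, and since $u(n,\cdot)$ converges absolutely at one point of $[a,b]$, the classical theorem on term-by-term differentiation applies and yields $\partial_t u(n,t)=\sum_{m\in\Z}\partial_t G(n-m,t)f(m)$. By the kernel identity this equals $u(n+1,t)-2u(n,t)+u(n-1,t)$, so $u$ solves the equation for all $t>0$.

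It remains to verify the initial condition $u(n,0)=f(n)$, understood as $\lim_{t\to0^+}u(n,t)=f(n)$. Isolating the term $m=n$ and using $\sum_{m\in\Z} e^{-2t}I_{n-m}(2t)=1$,
\[
|u(n,t)-f(n)| \le |e^{-2t}I_0(2t)-1|\,\|f\|_{\ell^\infty} + \bigl(1-e^{-2t}I_0(2t)\bigr)\|f\|_{\ell^\infty},
\]
and the right-hand side tends to $0$ as $t\to0^+$ because $I_0$ is continuous with $I_0(0)=1$, by~\eqref{eq:Ik0}. I expect the term-by-term differentiation to be the main obstacle, although it is routine once the rapid decay estimates for $I_k(2t)$ from Section~\ref{sec:preliminaries} are in hand; for $f\in\ell^2$ one could alternatively argue on the Fourier side, where $\mathcal{F}_{\Z}(u(\cdot,t))(\theta)=e^{-4t\sin^2(\theta/2)}\mathcal{F}_{\Z}(f)(\theta)$ is differentiable in $t$ with derivative $(2\cos\theta-2)\,e^{-4t\sin^2(\theta/2)}\mathcal{F}_{\Z}(f)(\theta)$ and $2\cos\theta-2$ is the Fourier symbol of $\Delta_{\dis}$, but this route does not cover all of $\ell^\infty$.
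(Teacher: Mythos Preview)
Your proof is correct and follows the same route as the paper: the paper's argument is the single sentence ``We just use~\eqref{eq:derivadaCalort} and~\eqref{eq:Ik0},'' and these are precisely the kernel identity $\partial_t G(k,t)=G(k+1,t)-2G(k,t)+G(k-1,t)$ and the values $I_k(0)=\delta_{k0}$ that you invoke. Your version adds the justification for term-by-term differentiation (via the $M$-test and the decay of $I_k(2t)$ in $k$) and treats the initial condition as a limit, both of which the paper leaves implicit.
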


\begin{proof}
We just use~\eqref{eq:derivadaCalort} and~\eqref{eq:Ik0}.
\end{proof}

\begin{rem}\label{Poisson}
It can be checked that the Poisson operator defined, via the subordination formula
\begin{equation} \label{subordination}
e^{-\beta t}=\frac{1}{\sqrt\pi} \int_0^{\infty} \frac{e^{-u}}{\sqrt u}e^{-\frac{t^2 \beta^2}{4u}}\,du
= \frac{t}{2\sqrt\pi} \int_0^{\infty} \frac{e^{- t^2/(4v)}}{\sqrt {v}}e^{-v\beta^2}\,\frac{dv}{v},
\end{equation}
by
\begin{equation}
\label{eq:Poissonsub}
P_tf(n) = \frac{1}{\sqrt\pi} \int_0^{\infty} \frac{e^{-u}}{\sqrt u}W_{t^2/(4u)}f(n)\,du
= \frac{t}{2\sqrt\pi} \int_0^{\infty} \frac{e^{- t^2/(4v)}}{\sqrt {v}}W_vf(n)\,\frac{dv}{v}
\end{equation}
satisfies the ``Laplace'' equation
\[
\partial^2_{tt} P_tf(n) + \Delta_{\dis} P_tf(n)=0.
\]
\end{rem}

\section{Maximum principle for fractional powers of the discrete Laplacian}
\label{sec:maximum}

In this section we prove Theorem~\ref{thm:fractional-Laplacian}.
Given $0<\sigma < 1$, we define
\begin{equation}
\label{fraccionario}
(-\Delta_{\dis})^\sigma f (n)= \frac1{\Gamma(-\sigma)} \int_0^\infty
(e^{t\Delta_{\dis}}f(n) -f(n) )\frac{dt}{t^{1+\sigma}}, \quad f \in\ell^2.
\end{equation}
Notice that, by~\eqref{eq:heatconv}, \eqref{eq:heatkernel} and~\eqref{eq:sumIk}, the integrand of this operator can be written as
\begin{equation}
\label{fracaux}
  e^{t\Delta_{\dis}}f(n) - f(n) = \sum_{m\in \Z} G(n-m,t) \big(f(m)-f(n) \big).
\end{equation}

\begin{proof}[Proof of Theorem~\ref{thm:fractional-Laplacian}]
Observe that $(-\Delta_{\dis})^\sigma f $ is well defined in formula~\eqref{fraccionario}; indeed,
if we decompose
\begin{multline*}
\int_0^\infty |e^{t\Delta_{\dis}}f(n) -f(n)| \frac{dt}{t^{1+\sigma}} \\
=
\int_0^{1/8} |e^{t\Delta_{\dis}}f(n) -f(n)| \frac{dt}{t^{1+\sigma}}
+
\int_{1/8}^\infty |e^{t\Delta_{\dis}}f(n) -f(n)| \frac{dt}{t^{1+\sigma}},
\end{multline*}
both integrals are finite by~\eqref{util} and~\eqref{fracaux}.
Then,
\begin{align*}
(-\Delta_{\dis})^\sigma f (n_0)
&= \frac1{\Gamma(-\sigma)}
\int_0^\infty \sum_{m\in \Z} G(n_0-m,t)\big(f(m)-f(n_0) \big) \frac{dt}{t^{1+\sigma}} \\
&= \frac1{\Gamma(-\sigma)}
\int_0^\infty \sum_{m\in \Z} G(n_0-m,t) f(m) \frac{dt}{t^{1+\sigma}}.
\end{align*}
The positivity of $G(n,t)$ gives the maximum principle stated in (i).

The comparison principle for $(-\Delta)^\sigma$ in (ii) is an immediate consequence of the maximum principle.
\end{proof}

\section{Heat and Poisson semigroups as $\ell^\infty$ norms of Calder\'on--Zygmund operators}
\label{sec:semigroups}

In this section, we give the proof of Theorem~\ref{thm:semigroups-gg} for the operators $W^*$ and~$P^*$. The key point is to obtain estimates for the kernels, which are contained in Proposition~\ref{prop:heat-kernel-estimates} below.

In the proof of such proposition, apart from facts concerning modified Bessel functions (see Section~\ref{sec:preliminaries}), we will frequently use the well-known fact
\begin{equation}
\label{eq:gambas}
\frac{\Gamma(z+r)}{\Gamma(z+t)}\sim z^{r-t}, \quad z>0, \quad r,t\in \R.
\end{equation}
Also that, for $\eta>0$ and $\gamma\ge0$,
\begin{equation}
\label{functionh}
(1-r)^\eta r^{\gamma}\le \left(\frac{\gamma}{\gamma+\eta}\right)^\gamma,
\quad\text{when}~0<r<1.
\end{equation}
Actually, both estimates above will be needed throughout the rest of the paper.

\begin{prop}
\label{prop:heat-kernel-estimates}
Let $T(m,t)$ be either the  discrete heat kernel $G(m,t)$ or the Poisson kernel.
The following estimates are satisfied:
\[
\sup_{t\ge0}|T(m,t)|\le \frac{C_1}{|m|+1} \qquad \hbox{and} \qquad
\sup_{t\ge0}|T(m+1,t)-T(m,t) |\le \frac{C_2}{m^2+1},
\]
where $C_1$ and $C_2$ are constants independent of $m\in \Z$.
\end{prop}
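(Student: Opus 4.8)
The plan is to establish the two bounds first for the heat kernel $G(m,t)=e^{-2t}I_m(2t)$ and then transfer them to the Poisson kernel through the subordination formula~\eqref{eq:Poissonsub}. By the symmetry~\eqref{eq:simetriak} it suffices to treat integers $m\ge0$; the difference estimate for $m<0$ reduces to the case $m\ge0$ by writing $G(m+1,t)-G(m,t)=G(|m+1|,t)-G(|m|,t)$, which only changes the constant. The engine of the whole argument is the integral representation
\[
e^{-2t}I_m(2t)=\frac1\pi\int_0^\pi e^{-4t\sin^2(\theta/2)}\cos(m\theta)\,d\theta,\qquad m\ge0,
\]
obtained from $1-\cos\theta=2\sin^2(\theta/2)$ and the formula $\pi I_m(z)=\int_0^\pi e^{z\cos\theta}\cos(m\theta)\,d\theta$ already used in Section~\ref{sec:disheatsemigroup}.

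For the size bound I would integrate by parts once. Writing $\psi(\theta)=e^{-4t\sin^2(\theta/2)}$, one has $\psi'(\theta)=-2t\sin\theta\,\psi(\theta)$, and since $\sin(m\theta)$ vanishes at $\theta=0$ and $\theta=\pi$ the boundary term disappears, leaving $e^{-2t}I_m(2t)=\frac{2t}{\pi m}\int_0^\pi\sin\theta\,\psi(\theta)\sin(m\theta)\,d\theta$ for $m\ge1$. Using $|\sin(m\theta)|\le1$, $\sin\theta\ge0$ on $[0,\pi]$, and the substitution $u=\sin^2(\theta/2)$, the remaining integral equals $\int_0^1 2e^{-4tu}\,du=(1-e^{-4t})/(2t)$, so (together with the positivity~\eqref{eq:Ik>0}) $0\le e^{-2t}I_m(2t)\le 1/(\pi m)$. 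Combined with the trivial bound $e^{-2t}I_0(2t)=\frac1\pi\int_0^\pi\psi\le1$, this gives $\sup_{t\ge0}G(m,t)\le C_1/(|m|+1)$.

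For the difference estimate I would use $\cos((m+1)\theta)-\cos(m\theta)=-2\sin((m+\tfrac12)\theta)\sin(\theta/2)$ to write, with $\lambda=m+\tfrac12$ and $\Phi(\theta)=\psi(\theta)\sin(\theta/2)$, $e^{-2t}(I_{m+1}(2t)-I_m(2t))=-\tfrac2\pi\int_0^\pi\Phi(\theta)\sin(\lambda\theta)\,d\theta$, and then integrate by parts twice. All boundary contributions vanish: $\cos(\lambda\pi)=0$, $\Phi(0)=0$, and $\Phi'(\pi)=0$ because $\sin\theta$ and $\cos(\theta/2)$ both vanish at $\theta=\pi$; hence $\int_0^\pi\Phi\sin(\lambda\theta)\,d\theta=-\lambda^{-2}\int_0^\pi\Phi''\sin(\lambda\theta)\,d\theta$, and it only remains to bound $\int_0^\pi|\Phi''(\theta)|\,d\theta$ \emph{uniformly in $t\ge0$}. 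This is the one genuine point. Writing $\Phi''=\psi''\sin(\theta/2)+\psi'\cos(\theta/2)-\tfrac14\psi\sin(\theta/2)$ with $\psi''=(4t^2\sin^2\theta-2t\cos\theta)\psi$, one checks that every term carries a factor $\sin(\theta/2)$; bounding $|\sin\theta|\le2\sin(\theta/2)$, $\sin^2\theta\le4\sin^2(\theta/2)$, $\sin(\theta/2)\le\theta/2$, and $\psi\le e^{-4t\theta^2/\pi^2}$ (from $\sin(\theta/2)\ge\theta/\pi$), the $t$-dependent pieces become Gaussian integrals of the type $t\int_0^\infty\theta\,e^{-ct\theta^2}\,d\theta$ and $t^2\int_0^\infty\theta^3 e^{-ct\theta^2}\,d\theta$, each $O(1)$ uniformly in $t$, while the remaining term is trivially controlled. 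This yields $|G(m+1,t)-G(m,t)|\le 2C_0/(\pi\lambda^2)\le C_2/(m^2+1)$.

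For the Poisson kernel $P(m,t)=\frac{t}{2\sqrt\pi}\int_0^\infty\frac{e^{-t^2/(4v)}}{\sqrt v}G(m,v)\,\frac{dv}{v}$ the two estimates then follow immediately: since the subordinating measure has total mass $1$ (this is~\eqref{subordination} with $\beta=0$), moving absolute values inside the integral gives $\sup_{t\ge0}|P(m,t)|\le\sup_{v>0}|G(m,v)|$ and $\sup_{t\ge0}|P(m+1,t)-P(m,t)|\le\sup_{v>0}|G(m+1,v)-G(m,v)|$, and the heat-kernel bounds just proved conclude; the values at $t=0$ are trivial. The main obstacle is the uniform-in-$t$ estimate for $\int_0^\pi|\Phi''|$; everything else is bookkeeping. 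An alternative would be to split into the ranges $0<t\lesssim m$ and $t\gtrsim m$ and invoke the power-series and large-argument asymptotics for $I_m$ recorded in Section~\ref{sec:preliminaries}, but keeping the bounds uniform across the transition $t\approx m$ is more delicate, so the integral-representation route seems preferable.
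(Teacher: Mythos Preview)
Your argument is correct and takes a genuinely different route from the paper's. The paper works with Schl\"afli's representation~\eqref{eq:Schlafli}, writes $G(m,t)$ as $\frac{4^m t^{-1/2}}{\sqrt\pi\,\Gamma(m+1/2)}\int_0^t e^{-4w}w^{m-1/2}(1-w/t)^{m-1/2}\,dw$, and extracts the decay in $m$ from the elementary inequality~\eqref{functionh} together with the Gamma asymptotics~\eqref{eq:gambas}; for the difference it invokes the precomputed identity~\eqref{eq:difIs} and repeats the same manipulation with an extra power of $w/t$. You instead use the Fourier-type representation $e^{-2t}I_m(2t)=\frac1\pi\int_0^\pi e^{-4t\sin^2(\theta/2)}\cos(m\theta)\,d\theta$ and gain the factors of $1/m$ and $1/(m+\tfrac12)^2$ by integration by parts, reducing everything to the uniform-in-$t$ bound on $\int_0^\pi|\Phi''|$, which you handle cleanly via Gaussian moments. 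Your approach is more self-contained (no Schl\"afli formula, no~\eqref{eq:gambas}, no~\eqref{functionh}) and even yields the explicit constant $1/\pi$ in the size bound. The paper's approach, on the other hand, is part of a unified toolkit: the same Schl\"afli identities~\eqref{eq:difIs3terms} and~\eqref{eq:difIs4terms} are reused verbatim in Propositions~\ref{prop:gg-kernel-estimates} and in Section~\ref{sec:Cauchy}, so its investment pays off downstream. Your integration-by-parts scheme would also extend to those higher-order differences, but each extra derivative of $\Phi$ multiplies the bookkeeping. The Poisson step is identical in both proofs.
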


\begin{proof}
We start with the heat kernel. Observe that, by \eqref{eq:Ik}, \eqref{eq:asymptotics-zero} and \eqref{eq:asymptotics-infinite}
\[
\sup_{t\ge0}G(0,t)=\sup_{t\ge0}e^{-2t}I_0(2t)\le C.
\]
Moreover, by \eqref{eq:simetriak}, we can assume that $m>0$.
By~\eqref{eq:heatkernel} and~\eqref{eq:Schlafli}, we can write
\begin{align*}
G(m,t) &= e^{-2t}I_m(2t)=e^{-2t}\frac{(2t)^m}{\sqrt{\pi}\,2^m\Gamma(m+1/2)}
\int_{-1}^1e^{-2ts}(1-s^2)^{m-1/2}\,ds\\
&= \frac{t^m}{\sqrt{\pi}\,\Gamma(m+1/2)} \int_{-1}^1e^{-2t(1+s)}(1-s^2)^{m-1/2}\,ds\\
&= \frac{2t^m}{\sqrt{\pi}\,\Gamma(m+1/2)} \int_{0}^te^{-4w}
\big(\tfrac{2w}{t}\big)^{m-1/2} \big(2\big(1-\tfrac{w}{t}\big)\big)^{m-1/2}\,
\tfrac{dw}{t}\\
&= \frac{t^{-1/2}4^m}{\sqrt{\pi}\,\Gamma(m+1/2)} \int_{0}^te^{-4w}w^{m-1}w^{1/2}
\big(1-\tfrac{w}{t}\big)^{m-1/2}\,dw\\
&\le \frac{4^m}{\sqrt{\pi}\,\Gamma(m+1/2)m^{1/2}}\int_{0}^te^{-4w}w^{m-1}\,dw
\le C\frac{\Gamma(m)}{\Gamma(m+1/2)m^{1/2}}\sim \frac{1}{m},
\end{align*}
where we have used~\eqref{functionh} with $r=\frac{w}{t}$, $\eta=m-1/2$ and $\gamma=1/2$, and~\eqref{eq:gambas}.

Concerning smoothness estimates, by using~\eqref{eq:difIs}, and proceeding as in the growth estimate, we arrive at
\begin{align*}
|DG(m,t)|
&=\frac{2t^{-3/2}4^m}{\sqrt{\pi}\,\Gamma(m+1/2)} \int_{0}^te^{-4w}w^{m-1}w^{3/2}
\big(1-\tfrac{w}{t}\big)^{m-1/2}\,dw\\
&\le 2\frac{4^m}{\sqrt{\pi}\,\Gamma(m+1/2)(m+1)^{3/2}} \int_{0}^te^{-4w}w^{m-1}\,dw\\
&\le C\frac{\Gamma(m)}{\Gamma(m+1/2)(m+1)^{3/2}} \sim \frac{1}{m^2},
\end{align*}
where we have used~\eqref{functionh} with $r=\frac{w}{t}$, $\eta=m-1/2$ and $\gamma=3/2$, and~\eqref{eq:gambas}.

By using the subordination formula~\eqref{eq:Poissonsub} we get that the Poisson kernel satisfies the desired estimates.
\end{proof}

\begin{proof}[Proof of Theorem~\ref{thm:semigroups-gg} for $W^*$ and $P^*$]
Let $\sup_{t\ge0} |T_t|$ be either $W^*$ (with $T_t=W_t$) or $P^*$ (with $T_t=P_t$). From Proposition~\ref{prop:heat-is-semigroup}, Stein's Maximal Theorem of diffusion semigroups (see \cite[Chapter~III, Section~2]{St-rojo}) establishes that $\sup_{t\ge0} T_t$ is bounded from $\ell^p$, $1< p< \infty,$ into itself. Then the vector valued operator  $f \mapsto \widetilde{T}f= \{T_t f\}_t$ is bounded from $\ell^p$ into $\ell^p_{L^\infty}$ for $1<p<\infty$. Here, $\ell^p_{\mathbb{B}}$ is the space of functions such that $\big(\sum_n\|f(n)\|^p_{\mathbb{B}}\big)^{1/p}$, with $\mathbb{B}$ a Banach space.

On the other hand, by Proposition~\ref{prop:heat-kernel-estimates} the kernel of the operator $\widetilde{T}$ satisfies the so-called Calder\'on--Zygmund estimates. Therefore, by the general theory of vector-valued Calder\'on--Zygmund operators in spaces of homogeneous type (see \cite{Rubio-Ruiz-Torrea,Ruiz-Torrea}), the operator $\widetilde{T}$
is bounded from $\ell^p(w)$ into $\ell^p_{L^\infty}(w)$, for $1<p<\infty$ and $w\in A_p$, and
from $\ell^1(w) $ into weak-$\ell^1_{L^\infty}(w)$, for $w\in A_1$.
\end{proof}

As a standard corollary of Theorem~\ref{thm:semigroups-gg} we have the following:

\begin{coro}
Let $\sup_{t\ge0} |T_t|$ be either $W^*$ (with $T_t=W_t$) or $P^*$ (with $T_t=P_t$). Then
$\lim_{t\to 0} T_t f(n) = f(n)$ for every $n$ and every function $f\in \ell^p(w)$, for $w\in A_p$, $1\le p < \infty$.
\end{coro}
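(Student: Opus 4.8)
The plan is the classical density-plus-maximal-operator argument: first establish the pointwise limit on a dense subclass by an explicit computation, and then transfer it to all of $\ell^p(w)$ using the boundedness of the maximal operators $W^*$ and $P^*$ from Theorem~\ref{thm:semigroups-gg}, together with the fact that, for an $A_p$ weight, evaluation at a fixed point is a bounded linear functional on $\ell^p(w)$ and on weak-$\ell^1(w)$.

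\textbf{Step 1 (dense subclass).} Since every $A_p$ weight satisfies $0<w(n)<\infty$ for all $n\in\Z$, the finitely supported sequences belong to $\ell^p(w)$ and are dense there for $1\le p<\infty$. Let $g$ be one such sequence. For $T_t=W_t$ the series $W_tg(n)=\sum_{m\in\Z}G(n-m,t)g(m)$ has only finitely many nonzero terms, and by~\eqref{eq:heatkernel} and~\eqref{eq:Ik0} each summand satisfies $G(n-m,t)=e^{-2t}I_{n-m}(2t)\to I_{n-m}(0)=\delta_{n-m}$ as $t\to0^+$; hence $W_tg(n)\to g(n)$ for every $n$. For $T_t=P_t$ I would plug in the subordination formula~\eqref{eq:Poissonsub}: the integrand $\frac{e^{-u}}{\sqrt u}\,W_{t^2/(4u)}g(n)$ tends to $\frac{e^{-u}}{\sqrt u}\,g(n)$ for each fixed $u>0$ by the case just treated (as $t^2/(4u)\to0$), and it is dominated by the integrable function $\frac{e^{-u}}{\sqrt u}\,\|g\|_{\ell^\infty}$; dominated convergence together with the normalization $\frac1{\sqrt\pi}\int_0^\infty\frac{e^{-u}}{\sqrt u}\,du=1$ (the case $\beta=0$ of~\eqref{subordination}) yields $P_tg(n)\to g(n)$.

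\textbf{Step 2 (general $f$).} Fix $n\in\Z$ and $\varepsilon>0$, and pick a finitely supported $g$ with $\|f-g\|_{\ell^p(w)}<\varepsilon$. By linearity of $T_t$ one bounds $\limsup_{t\to0}|T_tf(n)-f(n)|$ by $T^*(f-g)(n)+\limsup_{t\to0}|T_tg(n)-g(n)|+|g(n)-f(n)|$, where $T^*\in\{W^*,P^*\}$. The middle term is $0$ by Step 1. For the first term, Theorem~\ref{thm:semigroups-gg} gives that $T^*$ is bounded on $\ell^p(w)$ when $1<p<\infty$ and from $\ell^1(w)$ to weak-$\ell^1(w)$ when $p=1$; since $w(n)>0$, in both cases $T^*(f-g)(n)\le C_n\|f-g\|_{\ell^p(w)}\le C_n\varepsilon$ — for $p=1$ one uses that $T^*(f-g)(n)>\lambda$ forces $w(n)\le w(\{T^*(f-g)>\lambda\})\le C\|f-g\|_{\ell^1(w)}/\lambda$. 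Similarly $|g(n)-f(n)|\le w(n)^{-1/p}\varepsilon$. Letting $\varepsilon\to0$ gives $\lim_{t\to0}T_tf(n)=f(n)$.

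There is no genuine obstacle here; the only points requiring a little care are the dominated-convergence justification in the Poisson case and the observation — which is what makes the argument work for a single $n$ rather than merely in norm — that the (strong, resp.\ weak) maximal estimate of Theorem~\ref{thm:semigroups-gg} localizes to a point, precisely because point evaluation is a bounded functional on $\ell^p(w)$ and on weak-$\ell^1(w)$ whenever $w\in A_p$.
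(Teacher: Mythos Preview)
Your argument is correct and is precisely the ``standard'' density-plus-maximal-operator route the paper invokes (the paper gives no proof, merely labeling the statement a corollary of Theorem~\ref{thm:semigroups-gg}). The only details you add beyond the paper are the explicit verification on finitely supported sequences and the localization of the maximal inequality to a single point via $w(n)>0$; both are routine and handled correctly.
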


\section{The discrete $g$-function}
\label{sec:gg}

In this section, we prove Theorem~\ref{thm:semigroups-gg} for the discrete $g$-function
\[
gf(n) = \Big(\int_0^{\infty} |t\partial_t W_tf(n)|^2 \,\tfrac{dt}{t}\Big)^{1/2}.
\]
Let $\mathbb{B}=L^2((0,\infty),\frac{dt}{t})$ be a Banach space.
We shall first prove the following appropriate vector-valued kernel estimates.

\begin{prop}
\label{prop:gg-kernel-estimates}
Let $G(m,t)$ be the discrete heat kernel.
The following estimates are satisfied:
\begin{equation*}
\|t\,\partial_t G(m,t)\|_{\mathbb{B}}\le \frac{C_1}{|m|+1} \quad\text{and}\quad
\|D(t\,\partial_t G(m,t))\|_{\mathbb{B}}\le \frac{C_2}{m^2+1},
\end{equation*}
where $C_1$ and $C_2$ are constants independent of $m\in \Z$.
\end{prop}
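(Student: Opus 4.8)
The plan is to mimic the structure of the proof of Proposition~\ref{prop:heat-kernel-estimates}, replacing the supremum in $t$ by the $\mathbb{B}=L^2((0,\infty),dt/t)$ norm. First I would compute $t\,\partial_t G(m,t)$ explicitly. Since $\partial_t G(m,t)= \Delta_{\dis}G(m,t) = G(m+1,t)-2G(m,t)+G(m-1,t)$ by Proposition~\ref{prop:solutionHeat} (applied to $f=\delta_m$), one can alternatively differentiate the Schläfli-type integral representation used above. Using~\eqref{eq:derivadaCalort} together with the integral representation~\eqref{eq:Schlafli}, I expect to land on a formula of the shape
\[
t\,\partial_t G(m,t) = \frac{c\,t^{-1/2}4^m}{\sqrt\pi\,\Gamma(m+1/2)} \int_0^t e^{-4w} w^{m-1} P_m\big(\tfrac{w}{t}\big)\big(1-\tfrac{w}{t}\big)^{m-1/2}\,dw + (\text{similar terms}),
\]
where the extra factors of $w/t$ coming from the $t$-derivative are uniformly bounded and handled by~\eqref{functionh}, exactly as in the growth estimate for $G(m,t)$ itself.

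The second main step is to take the $\mathbb{B}$-norm. Here I would use Minkowski's integral inequality to pull $\|\cdot\|_{\mathbb{B}}$ inside the $dw$-integral: writing $K_m(w,t)$ for the integrand, we get $\|t\,\partial_t G(m,\cdot)\|_{\mathbb{B}} \le C\,\frac{4^m}{\sqrt\pi\,\Gamma(m+1/2)}\int_0^\infty e^{-4w}w^{m-1}\,\|\,t^{-1/2}(1-w/t)^{m-1/2}\mathbf{1}_{\{t>w\}}\,\|_{L^2(dt/t)}\,dw$. The inner $L^2(dt/t)$-norm of $t^{-1/2}(1-w/t)^{m-1/2}\mathbf{1}_{t>w}$ should be computable by the substitution $t = w/r$, $r\in(0,1)$, turning it into a Beta-type integral $\int_0^1 r^{1/2}(1-r)^{2m-1}\,\frac{dr}{r}$, i.e.\ $B(1/2, 2m-1)^{1/2}$ up to constants, which behaves like $m^{-1/4}$ by~\eqref{eq:gambas} (any power saving in $m$ here is fine, the point is it does not blow up). That leaves $\int_0^\infty e^{-4w}w^{m-1}\,dw = 4^{-m}\Gamma(m)$, and combining with the prefactor and using~\eqref{eq:gambas} once more gives $\frac{\Gamma(m)}{\Gamma(m+1/2)}\cdot m^{-1/4}\sim m^{-1/2-1/4}\le C/m$, as needed. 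The smoothness estimate is obtained the same way, applying~\eqref{eq:difIs} to $D(t\,\partial_t G(m,t))$, which introduces an extra factor comparable to $w^{3/2}/t^{3/2}$ inside the integral; using~\eqref{functionh} with $\gamma=3/2$ and tracking the gamma factors yields $\frac{\Gamma(m)}{\Gamma(m+1/2)(m+1)^{3/2}}\sim 1/m^2$, just as in Proposition~\ref{prop:heat-kernel-estimates}. The case $m=0$ is dealt with separately using the asymptotics~\eqref{eq:asymptotics-zero}, \eqref{eq:asymptotics-infinite} for $I_0$ and $I_1$, since $t\,\partial_t G(0,t)$ decays exponentially at infinity and is $O(t)$ near $0$, hence lies in $\mathbb{B}$.

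The main obstacle I anticipate is purely computational bookkeeping: differentiating the Schläfli representation in $t$ and correctly identifying which factors are the ``new'' ones that must be absorbed by~\eqref{functionh} versus which reproduce the structure already present in the $G(m,t)$ estimate, and similarly keeping the boundary term from differentiating the upper limit $t$ under control (it vanishes because of the $(1-w/t)^{m-1/2}$ factor when $m\ge1$). The only genuinely new analytic ingredient beyond Proposition~\ref{prop:heat-kernel-estimates} is the evaluation of the inner $L^2(dt/t)$-norm as a Beta integral, and verifying that its growth in $m$ is at worst a negative power of $m$ — any such bound suffices, so there is slack. Everything else is a direct transcription of the earlier argument.
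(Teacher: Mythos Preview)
There is a genuine gap. Your final arithmetic does not close: you end the growth estimate with $\frac{\Gamma(m)}{\Gamma(m+1/2)}\cdot m^{-1/4}\sim m^{-3/4}$, but $m^{-3/4}$ is \emph{not} bounded by $C/m$, so the required bound $C/(|m|+1)$ does not follow (and ``any power saving'' in the inner norm is not enough --- you need at least $m^{-1/2}$ there). The inner $L^2(dt/t)$ computation is also off: with $t=w/r$ one has $t^{-1}\,dt/t = (r/w)\,dr/r$, so
\[
\bigl\|t^{-1/2}(1-w/t)^{m-1/2}\mathbf{1}_{t>w}\bigr\|_{L^2(dt/t)}^2
=\frac{1}{w}\int_0^1(1-r)^{2m-1}\,dr=\frac{1}{2mw};
\]
the norm carries a factor $w^{-1/2}$ that must be fed back into the outer $dw$-integral. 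More seriously, Minkowski is dangerous here: if you differentiate the representation of $G(m,t)$ from Proposition~\ref{prop:heat-kernel-estimates} directly in $t$, the integrand acquires a factor $(m-4w)$, and Minkowski destroys the cancellation between its two pieces (their signed $dw$-integrals against $e^{-4w}w^{m-1}$ cancel exactly), leaving only an $O(1)$ bound. To make your scheme work you must start from identity~\eqref{eq:difIs3terms} (and for the smoothness estimate from~\eqref{eq:difIs4terms}, not~\eqref{eq:difIs}), which already encodes that cancellation; with the correct inner-norm bookkeeping one then does recover $C/m$ and $C/m^2$.

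The paper takes a different route: it squares the $\mathbb{B}$-norm, expands the square as a double integral in the Schl\"afli variables, integrates in $t$ first to produce a Gamma factor, and then bounds the remaining double integral by reducing to Beta functions. This avoids Minkowski altogether and hence sidesteps the cancellation issue.
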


\begin{proof}
By an analogous reasoning to that in Proposition~\ref{prop:heat-kernel-estimates}, we can assume that $m>0$.
We begin with the growth estimate. Observe that, by~\eqref{eq:derivadaCalort} and~\eqref{eq:difIs3terms}, one has
\begin{align*}
\|t\partial_tG(m,t)\|_{\mathbb{B}}
&= \|te^{-2t} (I_{m+1}(2t) - 2I_{m}(2t) + I_{m-1}(2t))\|_{\mathbb{B}}\\
&\le C\frac{S_1^{1/2}+S_2^{1/2}}{\Gamma(m-1/2)},
\end{align*}
where
\[
S_1 := \int_0^{\infty}t\Big(t^{m-2}e^{-2t}
\int_{-1}^1e^{-2ts}s(1-s^2)^{m-3/2}\,ds\Big)^2\,dt,
\]
and
\[
S_2 := \int_0^{\infty}t\Big(t^{m-1}e^{-2t}
\int_{-1}^1e^{-2ts}(1+s)^2(1-s^2)^{m-3/2}\,ds\Big)^2\,dt.
\]
Concerning $S_1$,
\begin{align*}
S_1 &\le \int_0^{\infty}t^{2m-3}e^{-4t}\int_{-1}^1e^{-2ts}(1-s^2)^{m-3/2}\,ds
\int_{-1}^1e^{-2tu}(1-u^2)^{m-3/2}\,du\,dt \\
&= \int_{-1}^1\int_{-1}^1(1-s^2)^{m-3/2}(1-u^2)^{m-3/2}
\int_0^{\infty}t^{2m-3}e^{-2t(2+s+u)}\,dt\,ds\,du \\
&= \frac{\Gamma(2m-2)}{2^{2m-2}}\int_{-1}^1\int_{-1}^1
\frac{(1-s^2)^{m-3/2}(1-u^2)^{m-3/2}}{(2+s+u)^{2m-2}}\,du\,ds \\
&= \Gamma(2m-2)\int_{0}^1
\int_{0}^1 \frac{x^{m-3/2}(1-x)^{m-3/2}y^{m-3/2}(1-y)^{m-3/2}}
{(x+y)^{2m-2}}\,dx\,dy
\\
&\le \Gamma(2m-2)\int_{0}^1y^{m-3/2}(1-y)^{m-3/2}
\int_{0}^1 \frac{x^{m-3/2}}
{(x+y)^{2m-2}}\,dx\,dy
\\
&= \Gamma(2m-2)\int_{0}^1(1-y)^{m-3/2}\int_{0}^{1/y}
\frac{w^{m-3/2}}
{(1+w)^{2m-2}}\,dw\,dy
\\
&\le \frac{\Gamma(2m-2)\Gamma(m-1/2)\Gamma(m-3/2)}{\Gamma(2m-2)}
\int_{0}^1(1-y)^{m-3/2}\,dy\int_0^{\infty}\frac{w^{m-3/2}}
{(1+w)^{2m-2}}\,dw \\*
&= \frac{\Gamma(m-1/2)^2}{(m-1/2)^2},
\end{align*}
so that
\[
\frac{S_1^{1/2}}{\Gamma(m-1/2)}\le\frac{C}{(m-1/2)}.
\]
For $S_2$, we proceed analogously, and we get
\begin{align*}
S_2 &\le \Gamma(2m)\frac{\Gamma(m+3/2)\Gamma(m-3/2)}
{\Gamma(2m)}\int_{0}^1y^{m+1/2}(1-y)^{m-3/2} \,dy \\
&= \frac{\Gamma(m+3/2)\Gamma(m-3/2)\Gamma(3)\Gamma(m-1/2)}{\Gamma(m+5/2)},
\end{align*}
hence
\[
\frac{S_2^{1/2}}{\Gamma(m-1/2)}\le\frac{C}{(m-1/2)}.
\]
By pasting together the estimates for $S_1$ and $S_2$, the bound for $\|t\partial_tG(m,t)\|_{\mathbb{B}}$ follows.

Now we pass to the smoothness estimates. By~\eqref{eq:heatkernel} and~\eqref{eq:difIs4terms}, we have
\begin{align*}
\|D(t\partial_tG(m,t))\|_{\mathbb{B}}
&= \|te^{-2t} (I_{m+2}(2t) - 3I_{m+1}(2t) + 3I_{m}(2t)-I_{m-1}(2t))\|_{\mathbb{B}}\\
&\le C \,\frac{T_1^{1/2}+T_2^{1/2}+T_3^{1/2}}{\Gamma(m-1/2)},
\end{align*}
where
\[
T_1 = \int_0^{\infty}t\Big(t^{m-3}e^{-2t}
\int_{-1}^1e^{-2ts}s(1-s^2)^{m-3/2}\,ds\Big)^2\,dt,
\]
\[
T_2 = \int_0^{\infty}t\Big(t^{m-2}e^{-2t}
\int_{-1}^1e^{-2ts}s(1+s)(1-s^2)^{m-3/2}\,ds\Big)^2\,dt,
\]
and
\[
T_3 = \int_0^{\infty}t\Big(t^{m-1}e^{-2t}
\int_{-1}^1e^{-2ts}(1+s)^3(1-s^2)^{m-3/2}\,ds\Big)^2\,dt.
\]
In order to treat each term we follow the same procedure as in the growth estimates. Hence, concerning $T_1$, we get
\begin{align*}
T_1&
\le \Gamma(2m-4)\frac{\Gamma(m-1/2)\Gamma(m-7/2)}{\Gamma(2m-4)}\int_{0}^1y^{2}
(1-y)^{m-3/2}
\,dy\\
&=\frac{\Gamma(m-7/2)\Gamma(3)\Gamma(m-1/2)^2}{\Gamma(m+5/2)}.
\end{align*}
For $T_2$ we obtain
\begin{align*}
T_2
&\le \Gamma(2m-2)\frac{\Gamma(m+1/2)\Gamma(m-5/2)}{\Gamma(2m-2)}\int_{0}^1y^{2}
(1-y)^{m-3/2}
\,dy\\
&=\frac{\Gamma(m+1/2)\Gamma(m-5/2)\Gamma(3)\Gamma(m-1/2)}{\Gamma(m+5/2)}.
\end{align*}
Finally,
\begin{align*}
T_3
&\le \Gamma(2m)\frac{\Gamma(m+5/2)\Gamma(m-5/2)}{\Gamma(2m)}\int_{0}^1y^{4}(1-y)^{m-3/2}\,dy
\\
&=\frac{\Gamma(m+5/2)\Gamma(m-5/2)}{\Gamma(m-1/2)^2}
\frac{\Gamma(5)\Gamma(m-1/2)}{\Gamma(m+9/2)}.
\end{align*}
By pasting together the estimates for $T_1$, $T_2$ and $T_3$, and~\eqref{eq:gambas}, we get the desired bound.
\end{proof}

\begin{proof}[Proof of Theorem~\ref{thm:semigroups-gg} for $g$]
Since $W_t$ is a diffusion semigroup, $g$ is bounded from $\ell^2$ into itself (see \cite[p.~74]{St-rojo}). In order to extend this result to weighted inequalities and to the range $1\le p<\infty$ we observe that $g$ can be seen as a vector-valued operator taking values in the Banach space $\mathbb{B}=L^2((0,\infty),\frac{dt}{t})$, so that $gf=\|t\partial_tW_tf\|_{\mathbb{B}}$. Indeed,
\[
gf(n)=\|S_tf(n)\|_{\mathbb{B}},
\]
where
\[
S_tf(n) = \sum_{m\in \Z}t \frac{\partial}{\partial t}G(n-m,t)f(n).
\]
Now, by Proposition~\ref{prop:gg-kernel-estimates} and the general theory of vector-valued Calder\'on--Zygmund operators, Theorem~\ref{thm:semigroups-gg} holds for the square function~$g$.
\end{proof}

\section{Riesz transforms: $\ell^2$ definition and mapping properties in $\ell^p(w)$ spaces}
\label{sec:Riesz}

In this section we define and study the discrete Riesz transforms, and we see that they coincide essentially with the discrete Hilbert transform.

A first attempt to define properly the discrete Riesz transform $\mathcal{R}$ could be by means of the heat semigroup, as done with the fractional Laplacian and square functions. In this way, we introduce the discrete fractional integral $(-\Delta_{d})^{-\alpha}$ for $0< \alpha < 1/2$. We use the formula
\begin{equation}
\label{eq:fractional-integral}
(-\Delta_{\dis})^{-\alpha} = \frac1{\Gamma(\alpha)} \int_0^\infty  e^{t\Delta_{\dis}} t^{\alpha} \,\frac{dt}{t}.
\end{equation}
Observe that, by \eqref{eq:heatkernel} and asymptotics \eqref{eq:asymptotics-infinite} for $t\to \infty$, the integral above is not absolutely convergent for $\alpha=1/2$.
Then, although formally we can write
\[
\mathcal{R}=D(-\Delta_{d})^{-1/2},
\]
we cannot define the Riesz transform in $\ell^2$ by using \eqref{eq:fractional-integral} with $\alpha=1/2$.

In order to define properly these operators in $\ell^2$, we need to develop an abstract theory of discrete distributions and Fourier transform (see \cite{Edwards, StWe} for details). Recall the definition of Fourier transform $\mathcal{F}_{\Z}$ given in Section~\ref{sec:disheatsemigroup}. We shall use the notation
\[
\mathcal{F}_\T(\varphi)(n):=\mathcal{F}^{-1}_\Z(\varphi)(n).
\]
Analogously, we can keep the notation $\mathcal{F}_\Z(f)(\theta):=\mathcal{F}^{-1}_\T(f)(\theta)$.
Let us consider the class of sequences  $\{c_n\}_{n \in \Z} $ such that for each $k\in \N$ there exists $C_k$ with $\sup_{n\in \Z}  |n|^k |c_n| < C_k.$ We denote this space of sequences by  $\mathcal{S}(\Z)$. With the obvious family of seminorms, the Fourier transform produces an isomorphism between the space  $\mathcal{S}(\Z)$ and  the  space of $C^\infty(\T)$ functions (every function in $C^\infty(\T)$ is associated with the sequence of its Fourier coefficients). This isomorphism  allows to define the Fourier transform in the spaces of distributions  $\big( \mathcal{S}(\Z) \big)'$ and $\big( C^\infty(\T) \big)'$  as follows:
\begin{equation}\label{FS}
\big\langle \mathcal{F}_\Z(\Lambda), \varphi \big\rangle_{\T} = \big\langle\Lambda,  \mathcal{F}_\T (\varphi) \big\rangle_{\Z},
\quad \Lambda \in \big( \mathcal{S}(\Z) \big)'  \text{ and }  \varphi  \in C^\infty(\T),
\end{equation}
and
\begin{equation}\label{FS2}
\big\langle \mathcal{F}_\T(\Phi), f \big\rangle_{\Z} = \big\langle\Phi,  \mathcal{F}_\Z (f) \big\rangle_{\T},
\quad \Phi \in \big(C^\infty(\T) \big)'  \text{ and }  f  \in \mathcal{S}(\Z),
\end{equation}
see \cite[Chapter~12]{Edwards}.  We shall need some extra definitions of different actions over sets of distributions.

The convolution of  a distribution $\Lambda \in \big( \mathcal{S}(\Z) \big)'$ with a function $f \in \mathcal{S}(\Z) $ is given by
\begin{equation}\label{convolution}
\big\langle \Lambda * f, g \big\rangle_{\Z} = \big\langle \Lambda, \tilde{f}*g\big\rangle_{\Z} ,
\quad f,g \in \mathcal{S}(\Z) \text{ and } \tilde{f} (n) = f(-n).
\end{equation}
The multiplication of a distribution $\mathcal{U}\in \big( C^\infty(\T) \big)'$ by a function $\varphi \in C^\infty(\T)$ is given by
\begin{equation}\label{producto}
\big\langle \mathcal{U} \psi, \varphi \big\rangle_{\T} = \big\langle  \mathcal{U} , \psi \varphi \big\rangle_{\T} ,
\quad \psi, \varphi  \in C^\infty(\T).
\end{equation}
Observe that by using~\eqref{FS}, \eqref{convolution}, \eqref{producto} and some other standard properties of Fourier transforms, we get
\begin{equation*}
\begin{aligned}
\big\langle \mathcal{F}_\Z(\Lambda* f), \varphi \big\rangle_{\T}
&= \big\langle \Lambda* f, \mathcal{F}_\T(\varphi) \big\rangle_{\Z}
= \big\langle \Lambda, \tilde{f}* \mathcal{F}_\T(\varphi) \big\rangle_{\Z} \\
&= \big\langle \Lambda, \mathcal{F}_\T[  \mathcal{F}_\T^{-1}(\tilde{f})  \varphi] \big\rangle_{\Z}
= \big\langle  \mathcal{F}_\Z (\Lambda), \mathcal{F}_\T^{-1}(\tilde{f})  \varphi \big\rangle_{\T}  \\
&= \big\langle  \mathcal{F}_\Z (\Lambda)  \mathcal{F}_\T^{-1}(\tilde{f}),  \varphi \big\rangle_{\T}
=  \Big\langle  \mathcal{F}_\Z (\Lambda)  \mathcal{F}_\Z(f), \varphi \big\rangle_{\T}.
\end{aligned}
\end{equation*}
Another fact that we need is the following. A linear operator $L: \big( \mathcal{S}(\Z) \big)' \to \big( \mathcal{S}(\Z) \big)'$ is bounded if and only if the operator $\mathcal{F}_\Z \circ L \circ \mathcal{F}_\Z^{-1}$ is bounded from
$\big( C^\infty(\T) \big)'$ into  $\big( C^\infty(\T) \big)'.$  This is illustrated with the diagram below:
\begin{equation}
\label{diagrama}
\begin{array}{ccc}
\big( \mathcal{S}(\Z) \big)' & \stackrel{L}{\longrightarrow} & \big( \mathcal{S}(\Z) \big)' \\[12pt]
\mathcal{F}_\Z\!\downarrow\phantom{\!\mathcal{F}} & & \phantom{\mathcal{F}\!}\downarrow\!\mathcal{F}_\Z \\[6pt]
\big( C^\infty(\T) \big)' & \stackrel{L}{\longrightarrow} & \big( C^\infty(\T) \big)'
\end{array}
\end{equation}

Now we are in position to define properly the Riesz transforms.

\begin{prop}
\label{prop:Riesz-SZ}
The discrete Riesz transforms $\mathcal{R}$ and $\widetilde{\mathcal{R}}$ defined in \eqref{nuevasriesz} are operators acting on $\mathcal{S}(\Z)$ by the convolution with the sequences $\big \{ \frac1{\pi(n+1/2)}\big \}_{n\in \Z}$ and $\big \{ \frac1{\pi(n-1/2)} \big \}_{n\in\Z}$, respectively.
\end{prop}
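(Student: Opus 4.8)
The plan is to reduce everything to Fourier multipliers on $\T$, using the isomorphism $\mathcal{S}(\Z)'\cong C^{\infty}(\T)'$ and the diagram~\eqref{diagrama}, and then to recognise the limiting multiplier as the symbol of the asserted convolution kernel. From the computation in the proof of Proposition~\ref{prop:heat-is-semigroup}, $W_t$ has symbol $e^{-4t\sin^{2}(\theta/2)}$; integrating this against $t^{\alpha-1}\,dt/\Gamma(\alpha)$ as in~\eqref{eq:fractional-integral} shows that, for $0<\alpha<1/2$ and $f\in\mathcal{S}(\Z)$, the sequence $(-\Delta_{\dis})^{-\alpha}f$ has Fourier transform $\big(4\sin^{2}(\theta/2)\big)^{-\alpha}\mathcal{F}_{\Z}(f)(\theta)$, which is an $L^{1}(\T)$ function precisely because $\alpha<1/2$ and $\mathcal{F}_{\Z}(f)\in C^{\infty}(\T)$. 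Since $D$ has symbol $e^{-i\theta}-1$ and $|e^{-i\theta}-1|=2|\sin(\theta/2)|$, the sequence $D(-\Delta_{\dis})^{-\alpha}f$ has Fourier transform $m_{\alpha}(\theta)\mathcal{F}_{\Z}(f)(\theta)$ with
\[
m_{\alpha}(\theta):=(e^{-i\theta}-1)\big(4\sin^{2}(\theta/2)\big)^{-\alpha},\qquad
|m_{\alpha}(\theta)|=\big(2|\sin(\theta/2)|\big)^{1-2\alpha}\le 2
\]
uniformly in $\theta$ and in $\alpha\in(0,1/2]$; in particular $D(-\Delta_{\dis})^{-\alpha}f\in\ell^{2}$.

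Next I would pass to the limit $\alpha\to(1/2)^{-}$. For $\theta\in(-\pi,\pi]\setminus\{0\}$ we have $m_{\alpha}(\theta)\to m(\theta):=(e^{-i\theta}-1)\big(4\sin^{2}(\theta/2)\big)^{-1/2}$, and writing $e^{-i\theta}-1=-2i\sin(\theta/2)e^{-i\theta/2}$ and using $\sin(\theta/2)/|\sin(\theta/2)|=\operatorname{sgn}(\theta)$ on that interval yields the bounded symbol $m(\theta)=-i\,\operatorname{sgn}(\theta)\,e^{-i\theta/2}$. By the uniform bound $|m_{\alpha}|\le 2$ and dominated convergence, $m_{\alpha}\mathcal{F}_{\Z}(f)\to m\,\mathcal{F}_{\Z}(f)$ in $L^{2}(\T)$, so $D(-\Delta_{\dis})^{-\alpha}f$ converges in $\ell^{2}$ (and pointwise, applying dominated convergence inside the inversion formula~\eqref{inversion}), the limit defining $\mathcal{R}f$, with $\mathcal{F}_{\Z}(\mathcal{R}f)=m\,\mathcal{F}_{\Z}(f)$. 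Since $m\in L^{\infty}(\T)$, the sequence $a:=\mathcal{F}_{\Z}^{-1}(m)$ lies in $\ell^{2}$, and since $f\in\mathcal{S}(\Z)\subset\ell^{1}$ the convolution $a*f$ converges absolutely with $\mathcal{F}_{\Z}(a*f)=m\,\mathcal{F}_{\Z}(f)$; by uniqueness of the Fourier transform, $\mathcal{R}f=a*f$.

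Finally I would evaluate $a$: by~\eqref{inversion},
\[
a_{n}=\frac{1}{2\pi}\int_{-\pi}^{\pi}m(\theta)e^{-in\theta}\,d\theta
=\frac{-i}{2\pi}\int_{-\pi}^{\pi}\operatorname{sgn}(\theta)\,e^{-i(n+1/2)\theta}\,d\theta,
\]
an elementary trigonometric integral which, using $\cos\big((n+\tfrac12)\pi\big)=0$, equals $\tfrac{1}{\pi(n+1/2)}$, as claimed. For $\widetilde{\mathcal{R}}$ the argument is the same word for word, with $D$ replaced by $\widetilde{D}$, whose symbol is $1-e^{i\theta}=-2i\sin(\theta/2)e^{i\theta/2}$: the limiting symbol becomes $-i\,\operatorname{sgn}(\theta)\,e^{i\theta/2}$ and the same integral now produces the kernel $\tfrac{1}{\pi(n-1/2)}$. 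I expect the only real difficulty to be bookkeeping rather than substance: giving $(-\Delta_{\dis})^{-\alpha}f$ a rigorous meaning over the whole range $0<\alpha<1/2$ (where it need not be square-summable), and checking that multiplication by the symbol of $D$ restores a symbol bounded uniformly as $\alpha\to(1/2)^{-}$, so that the defining limit may be taken termwise; once that is settled, what remains is the change-of-variable identities for $e^{\pm i\theta}-1$ and the evaluation of one standard integral.
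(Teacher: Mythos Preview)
Your approach is essentially the same as the paper's: identify the Fourier multipliers of $W_t$, $(-\Delta_{\dis})^{-\alpha}$, and $D$, pass to the limit $\alpha\to(1/2)^-$ to obtain the bounded multiplier of $\mathcal{R}$, and then compute its Fourier coefficients. The only cosmetic difference is that you work on $(-\pi,\pi]$ (whence the factor $\operatorname{sgn}(\theta)$) while the paper works on $[0,2\pi]$ where the multiplier reads simply $-ie^{-i\theta/2}$; your explicit uniform bound $|m_\alpha|\le 2$ and dominated convergence argument are a welcome sharpening of the paper's more informal passage to the limit.
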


\begin{proof}
We shall check that the operators we need to define the Riesz transforms can be seen as operators acting on~$C^\infty(\T)$.

Recall from Section~\ref{sec:disheatsemigroup} that the heat semigroup $W_t$ is given by convolution with the sequence $\{G(m,t)\}_{m\in \Z} = \{e^{-2t} I_m(2t)\}_{m\in \Z}$, for each $t$. We also showed that there exists a constant $C$ such that $G(m,t) \le \frac{C}{|m|+1},$ see  Proposition~\ref{prop:heat-kernel-estimates}. Hence
\begin{align*}
\big| \big \langle W_tf, g\big\rangle_\Z \big|
&= \big| \big\langle G(\cdot,t), \tilde{f}*g\big\rangle_\Z \big|
= \big|\sum_{n\in \Z} G(n,t) \tilde{f}*g(n) \big|\\
&\le C\sum_{n\in\Z} \frac1{|n|+1} |\tilde{f}*g(n)|.
\end{align*}
This inequality guaranties the boundedness of $W_t$ from $\mathcal{S}(\Z)$ into $\big( \mathcal{S}(\Z) \big)'$.  As a consequence of the computations made in the proof of Proposition~\ref{prop:heat-is-semigroup} (iii) we can understand the heat semigroup as the operator of multiplication by $e^{-4t\sin^2 \frac{(\cdot)}{2}}$, which is bounded from $C^\infty(\T)$ into $\big( C^\infty(\T)\big)'$. Even more, since $e^{-4t\sin^2 \frac{(\cdot)}{2}} \in C^\infty(\T)$, it is bounded from $C^\infty(\T)$ into itself.

Let us continue with the fractional integral
$(-\Delta_{\dis})^{-\alpha}$ defined in \eqref{eq:fractional-integral} for $0< \alpha < 1/2$. In our case, the action of the operator  $e^{t\Delta_{\dis}}$ is defined by $e^{t\Delta_{\dis}}(\varphi)(\theta)= e^{-4t\sin^2 \frac{\theta}{2}} \varphi(\theta)$. Hence
\begin{equation*}
(-\Delta_{\dis})^{-\alpha}\varphi(\theta) = \frac1{\Gamma(\alpha)} \int_0^\infty  e^{-4t\sin^2 \frac{\theta}{2}} \varphi(\theta) t^{\alpha}\,\frac{dt}{t}
= \Big(4\sin^2\frac{\theta}{2}\Big)^{-\alpha} \varphi(\theta).
\end{equation*}
Since the function $(4\sin^2\frac{\theta}{2})^{-\alpha}$ is integrable (remember that $0<\alpha < 1/2$) then $(-\Delta_{\dis})^{-\alpha}$ is bounded from
$C^\infty(\T)$ into  $C^\infty(\T).$

The last operator we are interested in is the first difference operator $D$ defined in \eqref{factorizacion}, acting on $\mathcal{S}(\Z)$. Observe that
\begin{align*}
\mathcal{F}_{\Z}(Df)(\theta) = \sum_{n\in\Z} f(n+1) e^{in\theta} -\sum_{n\in\Z} f(n) e^{in\theta}  = (e^{-i\theta}-1) \mathcal{F}_{\Z}(f)(\theta).
\end{align*}
That is, the operator  $\mathcal{F}_{\Z}\circ D \circ \mathcal{F}_{\Z}^{-1}$ is given by the multiplier $(e^{-i\theta}-1)$.

As a consequence, the operator  $\mathcal{R}^\alpha = \mathcal{F}_{\Z}\circ D (-\Delta_{\dis})^{-\alpha} \circ \mathcal{F}^{-1}_{\Z}$, $0< \alpha < 1/2$, (bounded from
$C^\infty(\T)$ into itself) is associated with the multiplier
\begin{align*}
(e^{-i\theta}-1)\Big(4\sin^2\frac{\theta}{2}\Big)^{-\alpha} &=
(e^{-i\theta}-1)\Big(\Big|2\sin \frac{\theta}{2}\Big|^2 \Big)^{-\alpha}
= e^{-i\theta/2} \frac{(e^{-i\theta/2}-e^{i\theta/2})}
{2^{2\alpha} |\sin\theta/2|^{2\alpha}}\\
&= e^{-i\theta/2} \frac{-2i\, \sin\theta/2}{2^{2\alpha} |\sin\theta/2|^{2\alpha}} .
\end{align*}
Hence  for $0\le \theta \le 2\pi,$
we have
$\lim_{\alpha \to (1/2)^{-}} (e^{-i\theta}-1)\Big(4\sin^2\frac{\theta}{2}\Big)^{-\alpha} = -ie^{-i\theta/2}$.
Therefore $\mathcal{R} = \lim_{\alpha \to (1/2)^{-}} \mathcal{R}^\alpha$, defined as the operator of multiplication by the function $-ie^{-i\theta/2}$,
is an operator bounded from $C^\infty(\T)$ into  $C^\infty(\T).$
On the other hand,
\begin{align*}
\frac1{2\pi} \int_{0}^{2\pi} -ie^{-i \theta/2} e^{-in\theta} \,d\theta
&= -\frac1{2\pi} \int_{0}^{2\pi } ie^{-i(n+\frac12)\theta} \,d\theta
= \frac1{\pi(n+\frac12)} .
\end{align*}
By the diagram~\eqref{diagrama} above, we conclude that the operator  $\mathcal{R}$ is an operator acting on $\mathcal{S}(\Z)$ by the convolution with the sequence $\big \{ \frac1{\pi(n+1/2)} \big \}_{n\in \Z}$.

By using an analogous reasoning with $\widetilde{D}$, it can be checked that $\widetilde{\mathcal{R}}$, defined on $C^{\infty}(\T)$ as the operator of multiplication by the function $-ie^{i\theta/2}$, corresponds with a convolution operator  with the sequence $\big \{ \frac1{\pi(n-1/2)} \big \}_{n\in \Z}$ acting on $\mathcal{S}(\Z)$.
 \end{proof}

Observe that Proposition~\ref{prop:Riesz-SZ} implies that  $\mathcal{R}$ and $\widetilde{\mathcal{R}}$ are bounded in $\ell^p$, $1 < p< \infty$, and they are well defined in~$\ell^1$. Moreover, since the kernels $\big \{ \frac1{\pi(n+1/2)} \big \}_{n\in\Z}$ and $\big \{ \frac1{\pi(n-1/2)} \big \}_{n\in\Z}$ obviously satisfy the Calder\'on--Zygmund estimates we have the following (well known) result.

\begin{coro}
\label{discreteriesz}
Let $w \in A_p$, $1\le p < \infty$. Then the operators $\mathcal{R}$ and $\widetilde{\mathcal{R}}$ are bounded from
$\ell^p(w)$ into itself and also from $\ell^1(w)$ into weak-$\ell^1(w)$.
\end{coro}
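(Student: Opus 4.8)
The plan is to derive the corollary directly from Proposition~\ref{prop:Riesz-SZ} together with the classical theory of Calder\'on--Zygmund operators on the space of homogeneous type $(\Z,\mu,|\cdot|)$, in the very same spirit as the proofs of Theorem~\ref{thm:semigroups-gg}. By Proposition~\ref{prop:Riesz-SZ}, $\mathcal{R}$ is convolution with the sequence $K(n)=\frac1{\pi(n+1/2)}$ and $\widetilde{\mathcal{R}}$ is convolution with $\widetilde{K}(n)=\frac1{\pi(n-1/2)}$; since these operators are given by bounded Fourier multipliers ($-ie^{-i\theta/2}$ and $-ie^{i\theta/2}$, both of modulus one), Plancherel on $\T$ immediately gives $\ell^2$-boundedness. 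So the only issue is to pass to weighted $\ell^p(w)$ and weak-$\ell^1(w)$ estimates.

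First I would record the size and smoothness (Calder\'on--Zygmund) bounds for the kernels. One has $|K(n)|=\frac1{\pi|n+1/2|}\le \frac{C}{|n|+1}$, and for the smoothness, a telescoping/mean-value estimate gives $|K(n+1)-K(n)|=\frac1{\pi}\bigl|\frac1{n+3/2}-\frac1{n+1/2}\bigr|=\frac1{\pi}\frac{1}{|n+1/2|\,|n+3/2|}\le\frac{C}{n^2+1}$, and similarly for $\widetilde K$. These are precisely the standard kernel conditions in the homogeneous-space setting $(\Z,\mu,|\cdot|)$, so both $\mathcal{R}$ and $\widetilde{\mathcal{R}}$ are (scalar-valued) Calder\'on--Zygmund operators there, bounded on (unweighted) $\ell^2$.

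Then I would invoke the weighted Calder\'on--Zygmund theory on spaces of homogeneous type (as already used in the proof of Theorem~\ref{thm:semigroups-gg}, via \cite{Rubio-Ruiz-Torrea,Ruiz-Torrea}, or equivalently the classical one-dimensional results of \cite{HuMuWh}): a Calder\'on--Zygmund operator bounded on $\ell^2$ is automatically bounded on $\ell^p(w)$ for every $1<p<\infty$ and $w\in A_p$, and maps $\ell^1(w)$ into weak-$\ell^1(w)$ for $w\in A_1$. Applying this to $\mathcal{R}$ and to $\widetilde{\mathcal{R}}$ yields the statement.

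I do not expect any genuine obstacle here: the corollary is explicitly flagged as "well known," the kernels are completely explicit, and all the heavy machinery (the distributional framework giving the $\ell^2$ identification, and the vector-valued/weighted Calder\'on--Zygmund theory) has already been set up earlier in the paper. The only point requiring a line of care is the elementary verification of the smoothness bound $|K(n+1)-K(n)|\le C/(n^2+1)$ uniformly in $n$, which is a one-step computation.
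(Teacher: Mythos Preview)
Your proposal is correct and matches the paper's own argument essentially verbatim: the paper also observes that Proposition~\ref{prop:Riesz-SZ} gives the $\ell^2$-boundedness via the explicit unimodular multipliers, notes that the kernels $\frac1{\pi(n\pm1/2)}$ ``obviously satisfy the Calder\'on--Zygmund estimates,'' and then appeals to the weighted Calder\'on--Zygmund theory on $(\Z,\mu,|\cdot|)$. Your write-up simply spells out the size and smoothness bounds that the paper leaves as evident.
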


\section{Riesz transforms as limits of ``harmonic'' functions}
\label{sec:Cauchy}

Recall the conjugate harmonic operators
\[
Q_t f= \mathcal{R}  P_t f  \quad\text{and}\quad  \widetilde{Q}_tf = \widetilde{\mathcal{R}}P_t f.
\]
In this section, we shall prove Theorem~\ref{thm:Cauchy}. First, we show that these operators are well defined and satisfy several properties for good functions.

\begin{prop} Let $Q_t$ and $\widetilde{Q}_t$ be defined as above and $f$ be a compactly supported function.
\begin{itemize}
\item[(i)] The operators $Q_t$, $\widetilde{Q}_t$ and $P_t$ satisfy the Cauchy--Riemann type equations
\[
\begin{cases}
\partial_t(Q_tf) = -D(P_tf), \\
 \widetilde{D}(Q_tf) = \partial_t(P_tf);
\end{cases}
\quad \quad\begin{cases}
\partial_t(\widetilde{Q}_tf)
= -\widetilde{D}(P_tf),\\
 D(\widetilde{Q}_tf) = \partial_t(P_tf).
\end{cases}
\]
Moreover, $\partial^2_{tt} Q_tf(n) + \Delta_{\dis} Q_tf(n) =0$
and $\partial^2_{tt} \widetilde{Q}_tf(n)  + \Delta_{\dis} \widetilde{Q}_tf(n) =0$.
\item[(ii)] We have
\[
\lim_{t\to 0} Q_tf(n) = \mathcal{R} f(n) \quad \text{and}\quad \lim_{t\to 0} \widetilde{Q}_tf(n) = \widetilde{\mathcal{R}} f(n),
\]
for $n \in \mathbb{Z}$.

\item[(iii)] We have $\big| \mathcal{F}_{\Z}(Q_t f)(\theta) \big | + \big| \mathcal{F}_{\Z}(\widetilde{Q}_t f)(\theta) \big | \le C | \mathcal{F}_{\Z}(f)(\theta)|$.
\end{itemize}
\end{prop}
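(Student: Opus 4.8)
The plan is to transfer the whole statement to the Fourier side, where every operator involved becomes multiplication by an explicit symbol on $\T$; once this is done, parts (i)--(iii) reduce to elementary identities between exponentials. I identify $\T$ with $[0,2\pi)$, so that $\sin(\theta/2)\ge0$ and $\sqrt{4\sin^2(\theta/2)}=2\sin(\theta/2)$; this is the parametrization already used in Proposition~\ref{prop:Riesz-SZ}. Recalling from the proof of Proposition~\ref{prop:heat-is-semigroup} that $\mathcal{F}_\Z(W_tg)(\theta)=e^{-4t\sin^2(\theta/2)}\mathcal{F}_\Z(g)(\theta)$, the subordination formula~\eqref{subordination}--\eqref{eq:Poissonsub} gives $\mathcal{F}_\Z(P_tg)(\theta)=e^{-2t\sin(\theta/2)}\mathcal{F}_\Z(g)(\theta)$, while the computations in the proof of Proposition~\ref{prop:Riesz-SZ} yield $\mathcal{F}_\Z(\mathcal{R}g)(\theta)=-ie^{-i\theta/2}\mathcal{F}_\Z(g)(\theta)$, $\mathcal{F}_\Z(\widetilde{\mathcal{R}}g)(\theta)=-ie^{i\theta/2}\mathcal{F}_\Z(g)(\theta)$, $\mathcal{F}_\Z(Dg)(\theta)=(e^{-i\theta}-1)\mathcal{F}_\Z(g)(\theta)$ and $\mathcal{F}_\Z(\widetilde{D}g)(\theta)=(1-e^{i\theta})\mathcal{F}_\Z(g)(\theta)$. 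Since $f$ is compactly supported, $\mathcal{F}_\Z(f)$ is a trigonometric polynomial (hence bounded and $C^\infty$ on $\T$), and composing the above we obtain
\[
\mathcal{F}_\Z(Q_tf)(\theta)=-ie^{-i\theta/2}\,e^{-2t\sin(\theta/2)}\,\mathcal{F}_\Z(f)(\theta),
\qquad
\mathcal{F}_\Z(\widetilde{Q}_tf)(\theta)=-ie^{i\theta/2}\,e^{-2t\sin(\theta/2)}\,\mathcal{F}_\Z(f)(\theta).
\]

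Part (iii) is then immediate: $|-ie^{\pm i\theta/2}|=1$ and $0<e^{-2t\sin(\theta/2)}\le1$ for $t\ge0$, so $|\mathcal{F}_\Z(Q_tf)(\theta)|+|\mathcal{F}_\Z(\widetilde{Q}_tf)(\theta)|\le2|\mathcal{F}_\Z(f)(\theta)|$, i.e.\ (iii) holds with $C=2$. For part (ii) I would apply the inversion formula~\eqref{inversion}, writing $Q_tf(n)=\frac1{2\pi}\int_\T \mathcal{F}_\Z(Q_tf)(\theta)e^{-in\theta}\,d\theta$; as $t\to0$ the integrand converges pointwise to $-ie^{-i\theta/2}\mathcal{F}_\Z(f)(\theta)e^{-in\theta}=\mathcal{F}_\Z(\mathcal{R}f)(\theta)e^{-in\theta}$ and is dominated by the integrable function $|\mathcal{F}_\Z(f)(\theta)|$, so dominated convergence gives $\lim_{t\to0}Q_tf(n)=\mathcal{R}f(n)$, and analogously for $\widetilde{Q}_t$.

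For part (i), since $\mathcal{F}_\Z(f)$ is a trigonometric polynomial and all the symbols together with their $t$-derivatives are bounded on $\T$, the operators $\partial_t$, $D$ and $\widetilde{D}$ may be moved freely through $\mathcal{F}_\Z$ (differentiation under the integral sign). Differentiating the displayed formula, $\partial_t\mathcal{F}_\Z(Q_tf)(\theta)=2i\sin(\theta/2)\,e^{-i\theta/2}e^{-2t\sin(\theta/2)}\mathcal{F}_\Z(f)(\theta)$, whereas $-\mathcal{F}_\Z(DP_tf)(\theta)=(1-e^{-i\theta})e^{-2t\sin(\theta/2)}\mathcal{F}_\Z(f)(\theta)$; the identity $1-e^{-i\theta}=2i\sin(\theta/2)e^{-i\theta/2}$ gives $\partial_t(Q_tf)=-D(P_tf)$. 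Likewise $\mathcal{F}_\Z(\widetilde{D}Q_tf)(\theta)=(1-e^{i\theta})(-ie^{-i\theta/2})e^{-2t\sin(\theta/2)}\mathcal{F}_\Z(f)(\theta)=-2\sin(\theta/2)e^{-2t\sin(\theta/2)}\mathcal{F}_\Z(f)(\theta)=\partial_t\mathcal{F}_\Z(P_tf)(\theta)$, i.e.\ $\widetilde{D}(Q_tf)=\partial_t(P_tf)$; the two equations for $\widetilde{Q}_t$ follow in exactly the same way. Finally, for the ``Laplace'' equations one may either iterate the Cauchy--Riemann equations using $D\widetilde{D}=\widetilde{D}D=\Delta_{\dis}$ (so $\partial^2_{tt}Q_tf=-D(\partial_tP_tf)=-D\widetilde{D}(Q_tf)=-\Delta_{\dis}Q_tf$, and similarly for $\widetilde{Q}_t$), or observe directly that $\partial^2_{tt}\mathcal{F}_\Z(Q_tf)(\theta)=4\sin^2(\theta/2)\mathcal{F}_\Z(Q_tf)(\theta)=-\mathcal{F}_\Z(\Delta_{\dis}Q_tf)(\theta)$.

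I do not expect a serious obstacle: once the symbols are identified the argument is pure bookkeeping. The only points needing a word of care are the legitimacy of interchanging $\partial_t$, $D$, $\widetilde{D}$ with the Fourier transform and with the inversion integral (guaranteed by the compact support of $f$ together with the boundedness on $\T$ of every symbol and of its $t$-derivative), and keeping the parametrization $\T\cong[0,2\pi)$ throughout so that $\sqrt{4\sin^2(\theta/2)}=2\sin(\theta/2)$ — the same convention under which $\mathcal{R}$ and $\widetilde{\mathcal{R}}$ were computed in Proposition~\ref{prop:Riesz-SZ}.
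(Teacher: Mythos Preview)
Your proposal is correct and follows exactly the approach of the paper: the paper's proof consists of two sentences observing that $Q_t$ and $\widetilde{Q}_t$ act on the Fourier side as multiplication by $-ie^{-i\theta/2}e^{-2t|\sin(\theta/2)|}$ and $-ie^{i\theta/2}e^{-2t|\sin(\theta/2)|}$ respectively, after which ``the conclusions are obvious.'' You have simply unpacked those obvious conclusions---the symbol identities for the Cauchy--Riemann equations, the dominated-convergence argument for (ii), and the trivial bound for (iii)---with the appropriate care about interchanging $\partial_t$ with the inversion integral.
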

\begin{proof}
By the results on Section~\ref{sec:disheatsemigroup} and Section~\ref{sec:Riesz}, it is clear that the operators $Q_t$ and $\widetilde{Q}_t$ can be defined as operators of multiplication by $-ie^{-i\theta/2}e^{-2t|\sin \theta/2|}$ and $-ie^{i\theta/2}e^{-2t|\sin \theta/2|}$, respectively. Then, the conclusions are obvious.
\end{proof}

\begin{rem}
By conjugate \textit{harmonic} functions we mean that $Q_tf(n)$ and $\widetilde{Q}_tf(n)$ are the \textit{harmonic} conjugate functions of the \textit{harmonic} function $P_tf(n)$ in the variables $(t,n)$, see Remark~\ref{Poisson} in Section~\ref{sec:disheatsemigroup}.
\end{rem}

\begin{proof}[Proof of Theorem~\ref{thm:Cauchy}]
We shall prove the theorem for $Q_t$ only, since the proof for $\widetilde{Q}_t$ is similar.
In Section~\ref{sec:Riesz} we saw that the ``natural'' Riesz transforms associated with the operator $\Delta_{\dis}$  are bounded on the spaces $\ell^p(w)$, see Corollary~\ref{discreteriesz}.
This together with Theorem~\ref{thm:semigroups-gg} show that the conjugate harmonic functions are well defined for all  functions in $\ell^p(w)$, $1 \le p < \infty$, and also that
$\lim_{t\to 0} Q_tf = \mathcal{R}f $ in $\ell^p(w)$ sense and pointwise for  $1 < p< \infty.$ In order to get the appropriate results in $\ell^1(w)$ we need to work a bit more. We will prove that the operator $\sup_{t\ge0} |Q_tf|$ can be viewed as the norm of vector-valued Calder\'on--Zygmund operators whose kernels satisfy standard estimates. Once this fact has been shown, (i) will be a direct consequence.

If we take the derivative
with respect to $\beta$ in the subordination formula~\eqref{subordination}, we have
\begin{equation}
\label{eq:subPt}
\frac1{\beta} e^{-\beta t} = \frac1{\sqrt{\pi}}
\int_0^\infty \frac{e^{-t^2/(4v)}}{\sqrt{v}} e^{-v\beta^2} \,dv.
\end{equation}
Then, observe that, for $\theta \in [0,2\pi]$, we obtain
\begin{align}
\label{astucia}
\nonumber
\mathcal{F}_{\Z}(\mathcal{R} P_tf)(\theta)
&= -ie^{-i\theta/2} e^{-t|2\sin \theta/2|}  \mathcal{F}_{\Z}(f)(\theta)\\
& \nonumber =
e^{-i\theta/2} \frac{2i \sin\theta/2}{2|\sin\theta/2|} e^{-t|2\sin \theta/2|} \mathcal{F}_{\Z}(f)(\theta) \\
&= (e^{-i\theta}-1) \frac{1} {(4\sin^2\theta/2)^{1/2} } e^{-t|2\sin \theta/2|} \mathcal{F}_{\Z}(f)(\theta) \\
&= \nonumber
\Big( \frac1{\sqrt{\pi}} \int_0^\infty e^{-t^2/(4v)} (e^{-i\theta}-1)
e^{-4v\sin^2(\theta/2)}\frac{dv}{v^{1/2}}\Big)  \mathcal{F}_{\Z}(f)(\theta),
\end{align}
where in the last identity we used~\eqref{eq:subPt}.
Formula~\eqref{astucia} allows us to write, for functions $f\in S(\Z),$
\begin{equation}
\label{subordinationQ}
\begin{aligned}
\{ Q_tf(n) \}_{t\ge0} &= \{ D L^{-1/2} P_t f(n) \}_{t\ge0}  \\
&= \Big\{ \frac1{\sqrt{\pi}} \int_0^\infty e^{-t^2/(4v)} DW_vf(n) \frac{dv}{v^{1/2}} \Big\}_{t\ge0}.
\end{aligned}
\end{equation}

Now we shall see that the kernel associated with the operator~\eqref{subordinationQ} satisfies the Calder\'on--Zygmund estimates; we use $Q(m,t)$ to denote this kernel. We consider only $m>0$, as in previous cases. By reproducing the arguments given in the proof of Proposition~\ref{prop:heat-kernel-estimates},
we get
\begin{align*}
|Q(m,t)| &= \frac{4^m\,2}{\pi \Gamma(m+1/2)} \\*
&\qquad \times \int_0^\infty e^{-t^2/(4v)}
\int_0^v e^{-4w} w^{m-1} \Big( \frac{w}{v}\Big)^{3/2}
\Big(1-\frac{w}{v}\Big)^{m-1/2}\,dw\, \frac{dv}{v^{1/2}}.
\end{align*}
Observe that
\begin{align*}
&\int_0^\infty e^{-t^2/(4v)}
\int_0^v e^{-4w} w^{m-1} \Big(\frac{w}{v}\Big)^{3/2}
\Big(1-\frac{w}{v}\Big)^{m-1/2}\, dw \,\frac{dv}{v^{1/2}} \\
&\qquad\quad =
\int_0^\infty e^{-t^2/(4v)} \int_0^1 e^{-4vs} s^{m-1} s^{3/2}
{(1-s)}^{m-1/2}\,ds\, v^{m-1/2}\,dv \\
&\qquad\quad =
\int_0^1 s^{m-1} s^{3/2} {(1-s)}^{m-1/2}
\int_0^\infty e^{-t^2/(4v)} e^{-4vs} v^{m-1/2} \,dv \, ds \\
&\qquad\quad =
\int_0^1 s^{m-1} s^{3/2} {(1-s)}^{m-1/2}
\int_0^\infty e^{-t^2s/r} e^{-r} \Big( \frac{r}{4s}\Big)^{m-1/2} \frac{dr}{4s}\, ds \\
&\qquad\quad \le
\int_0^1\frac{ s^{m-1}s^{3/2} {(1-s)}^{m-1/2}}{(4s)^{m-1/2} s}
\int_0^\infty e^{-r} r^{m-1/2} \,dr \,ds \\
&\qquad\quad = C 4^{-m} \int_0^1 {(1-s)}^{m-1/2} ds \, \Gamma(m+1/2) \\
&\qquad\quad = C 4^{-m} \, \frac{\Gamma(1) \Gamma(m+1/2)}{\Gamma(m+3/2)} \Gamma(m+1/2)
\sim C 4^{-m} \, \frac1{m}  \Gamma(m+1/2).
\end{align*}
Hence we have proved $\sup_{t\ge0} |Q(m,t)| \le \frac{C}{|m|+1}$, where $C$ is a constant independent of~$m$.

To end the proof of the theorem we need to prove the smoothness of the kernel, that is
\[
\sup_{t\ge0} |Q(m+1,t)-Q(m,t)| \le \frac{C}{m^2+1},
\]
with $C$ a constant independent of~$m$.
By using~\eqref{eq:difIs3terms}, we have
\begin{align*}
& |DQ(m,t)| = \bigg|\int_0^{\infty} e^{-\frac{t^2}{4v}}e^{-2v}
(I_{m+2}(2v)-2I_{m+1}(2v)+I_m(2v))
\frac{dv}{v^{1/2}}\bigg| \\
&\quad =\frac{1}{\sqrt{\pi} \,\Gamma(m+1/2)}\bigg|\int_0^{\infty}e^{-\frac{t^2}{4v}}v^m
\bigg(\frac{1}{v}
\int_{-1}^1e^{-2v(1+s)}s(1-s^2)^{m-1/2}\,ds \\
&\qquad\quad +\int_{-1}^1e^{-2v(1+s)}(1+s)^2(1-s^2)^{m-1/2}\,ds\bigg)\,dv\bigg|\\
&\quad\le \frac{4^m}{\sqrt{\pi}\,\Gamma(m+1/2)}\bigg(\int_0^1u^{m-1/2}(1-u)^{m-1/2}
\int_0^{\infty} e^{-\frac{t^2}{4v}}v^{m-3/2}e^{-4vu}\,dv\,du\\
& \qquad\quad +\int_0^1u^{m+3/2}(1-u)^{m-1/2}
\int_0^{\infty} e^{-\frac{t^2}{4v}}v^{m-1/2}e^{-4vu}\,dv\,du\bigg)=:I_1+I_2.
\end{align*}
For each one, we proceed similarly as in the growth estimates and we get
\[
I_1 \le \frac{C}{\sqrt{\pi}\,\Gamma(m+1/2)}
\frac{\Gamma(1)\Gamma(m+1/2)}{\Gamma(m+3/2)}
\Gamma(m-1/2) \sim (m+1/2)^{-2}
\]
and
\[
I_2\le \frac{C}{\sqrt{\pi}\,\Gamma(m+1/2)}
\frac{\Gamma(2)\Gamma(m+1/2)}{\Gamma(m+5/2)}
\Gamma(m+1/2)\sim (m+1/2)^{-2}.
\]
This ends the proof of~(i).

In order to prove (ii) and (iii) we observe that $Q_tf$, for $f \in \mathcal{S}(\Z)$, can be alternatively defined as
\[
Q_tf= \int_0^t DP_sf \, ds - \mathcal{R}f.
\]
By applying Fourier transform, we see that this last definition is valid for any function on $\ell^p(w)$, $1\le p < \infty$, $w\in A_p$. Moreover it can be checked that $Q_t$ satisfies (ii) and~(iii).
\end{proof}

\section{Technical results about the functions $I_k$}
\label{sec:preliminaries}

Let $I_k$ be the modified Bessel function of the first kind and order $k\in \Z$, defined as
\begin{equation}
\label{eq:Ik}
  I_k(t) = i^{-k} J_k(it) = \sum_{m=0}^{\infty} \frac{1}{m!\,\Gamma(m+k+1)} \left(\frac{t}{2}\right)^{2m+k}.
\end{equation}
Since $k$ is an integer (and $1/\Gamma(n)$ is taken to equal zero if $n=0,-1,-2,\ldots$), the function $I_k$ is defined in the whole real line (even in the whole complex plane, where $I_k$ is an entire function). We list several properties of~$I_k$. Most of them can be found in \cite[Chapter~5]{Lebedev} and~\cite{OlMax}.

It is verified that
\begin{equation}
\label{eq:simetriak}
I_{-k}(t) = I_k(t)
\end{equation}
for each $k \in \Z$. Besides, from~\eqref{eq:Ik} it is clear that
\begin{equation}
\label{eq:Ik0}
I_0(0) = 1 \quad \text{ and } \quad I_k(0) = 0 \quad \text{ for } \quad k \ne 0.
\end{equation}
The following identity is called \textit{Neumann's identity}, see \cite[Chapter~II, formula~(7.10)]{Feller}:
\begin{equation}
\label{eq:Neumann}
  I_{r}(t_1+t_2) = \sum_{k \in \Z} I_k(t_1) I_{r-k}(t_2) \quad \text{ for } \quad r \in \Z;
\end{equation}
this formula is an easy consequence of the generating function
\[
  e^{\frac12 t (u+u^{-1})} = \sum_{k \in \Z} u^k I_{k}(t)
\]
that sometimes serves as definition for~$I_k$ (see, for instance, \cite[formula 10.35.1]{OlMax}).
Other properties of $I_k$ are
\begin{equation}
\label{eq:Ik>0}
  I_k(t) \ge 0
\end{equation}
for every $k \in \Z$ and $t\ge0$, and
\begin{equation}
\label{eq:sumIk}
  \sum_{k \in \Z} e^{-2t} I_k(2t) = 1.
\end{equation}

Clearly, from \eqref{eq:Ik}, there exist constants $C$, $c>0$ such that
\begin{equation}
\label{eq:asymptotics-zero}
c t^k\le I_k(t)\le C t^k \quad \text{ for } t\to0^+.
\end{equation}
Moreover, it is well known (see \cite{Lebedev}) that
\begin{equation}
\label{eq:asymptotics-infinite}
I_k(t)=C e^t t^{-1/2}+ R_k(t),
\end{equation}
where
\[
|R_k(t)|\le C_k e^tt^{-3/2}, \quad \text{ for } t\to\infty.
\]

The modified Bessel function $I_k(t)$ also satisfies
\[
  \frac{\partial}{\partial t} I_k(t) = \frac{1}{2} (I_{k+1}(t) + I_{k-1}(t)),
\]
and from this it follows immediately
\begin{equation}
\label{eq:derivadaCalort}
  \frac{\partial}{\partial t} (e^{-2t} I_k(2t)) = e^{-2t} (I_{k+1}(2t) - 2I_{k}(2t) + I_{k-1}(2t)).
\end{equation}

The next identity is known as Schl\"afli's integral
representation of Poisson type for modified Bessel functions (see \cite[(5.10.22)]{Lebedev}), and it is valid for a real number $\nu>-\frac12$:
\begin{equation}
\label{eq:Schlafli}
I_{\nu}(z) = \frac{z^{\nu}}{\sqrt{\pi}\,2^{\nu}\Gamma(\nu+1/2)}
\int_{-1}^1e^{-zs} (1-s^2)^{\nu-1/2}\,ds, \quad
|\arg z|<\pi, \quad \nu>-\frac12.
\end{equation}
If we integrate by parts once, twice and three times in~\eqref{eq:Schlafli}, we get, respectively,
\begin{equation}
\label{eq:SchlafliII}
I_{\nu}(z) = -\frac{z^{\nu-1}}{\sqrt{\pi}\,2^{\nu-1}\Gamma(\nu-1/2)}
\int_{-1}^1e^{-zs} s(1-s^2)^{\nu-3/2}\,ds, \quad \nu>\frac12,
\end{equation}
\begin{equation}
\label{eq:SchlafliIII}
I_{\nu}(z)=\frac{z^{\nu-2}}{\sqrt{\pi}\,2^{\nu-2}\Gamma(\nu-3/2)}
\int_{-1}^1e^{-zs} \frac{1+zs}{z}s(1-s^2)^{\nu-5/2}\,ds, \quad \nu>\frac32,
\end{equation}
and
\begin{align}
\label{eq:SchlafliIV}
I_{\nu}(z) &= -\frac{z^{\nu-3}}{\sqrt{\pi}\,2^{\nu-3}\Gamma(\nu-5/2)} \\*
\notag& \qquad \times \int_{-1}^1 e^{-zs} \,\frac{s(s^2z^2+3sz+3)}{z^2}(1-s^2)^{\nu-7/2}\,ds,
\quad \nu>\frac52.
\end{align}
Combining~\eqref{eq:Schlafli} and~\eqref{eq:SchlafliII} we get, for $\nu>-1/2$,
\begin{equation}
\label{eq:difIs}
I_{\nu+1}(z)-I_\nu(z)
= -\frac{z^{\nu}}{\sqrt{\pi}\,2^{\nu}\Gamma(\nu+1/2)}  \int_{-1}^1e^{-zs} (1+s)(1-s^2)^{\nu-1/2}\,ds.
\end{equation}
Combining~\eqref{eq:Schlafli}, \eqref{eq:SchlafliII} and~\eqref{eq:SchlafliIII} we obtain, for $\nu>-1/2$,
\begin{equation}
\label{eq:difIs3terms}
\begin{aligned}
& I_{\nu+2}(z)-2I_{\nu+1}(z)+I_{\nu}(z)
= \frac{z^{\nu}}{\sqrt{\pi}\,2^{\nu}\Gamma(\nu+1/2)} \\*
& \qquad\times \Big(\frac{2}{z}
\int_{-1}^1e^{-zs} s(1-s^2)^{\nu-1/2}\,ds
+ \int_{-1}^1e^{-zs} (1+s)^2(1-s^2)^{\nu-1/2}\,ds \Big).
\end{aligned}
\end{equation}
Combining~\eqref{eq:Schlafli}, \eqref{eq:SchlafliII}, \eqref{eq:SchlafliIII} and~\eqref{eq:SchlafliIV} we obtain, for $\nu>-1/2$,
\begin{equation}
\label{eq:difIs4terms}
\begin{aligned}
&I_{\nu+3}(z)-3I_{\nu+2}(z)+3I_{\nu+1}(z)-I_\nu(z)
= \frac{z^{\nu}}{\sqrt{\pi}\,2^{\nu} \Gamma(\nu+1/2)} \\*
&\quad\times \Big(\frac{3}{z^2}
\int_{-1}^1e^{-zs} s(1-s^2)^{\nu-1/2}\,ds
+ \frac{3}{z}\int_{-1}^1e^{-zs} s(1+s)(1-s^2)^{\nu-1/2}\,ds \\
&\qquad + \int_{-1}^1e^{-zs} (1+s)^3(1-s^2)^{\nu-1/2}\,ds \Big).
\end{aligned}
\end{equation}


\end{document}